\begin{document}

\newtheorem{theorem}{Theorem}    
\newtheorem{proposition}[theorem]{Proposition}
\newtheorem{conjecture}[theorem]{Conjecture}
\def\theconjecture{\unskip}
\newtheorem{corollary}[theorem]{Corollary}
\newtheorem{lemma}[theorem]{Lemma}
\newtheorem{sublemma}[theorem]{Sublemma}
\newtheorem{fact}[theorem]{Fact}
\newtheorem{observation}[theorem]{Observation}
\theoremstyle{definition}
\newtheorem{definition}{Definition}
\newtheorem{notation}[definition]{Notation}
\newtheorem{remark}[definition]{Remark}
\newtheorem{question}[definition]{Question}
\newtheorem{questions}[definition]{Questions}
\newtheorem{example}[definition]{Example}
\newtheorem{problem}[definition]{Problem}
\newtheorem{exercise}[definition]{Exercise}

\numberwithin{theorem}{section}
\numberwithin{definition}{section}
\numberwithin{equation}{section}

\def\reals{{\mathbb R}}
\def\torus{{\mathbb T}}
\def\heis{{\mathbb H}}
\def\integers{{\mathbb Z}}
\def\rationals{{\mathbb Q}}
\def\naturals{{\mathbb N}}
\def\complex{{\mathbb C}\/}
\def\distance{\operatorname{distance}\,}
\def\support{\operatorname{support}\,}
\def\dist{\operatorname{dist}\,}
\def\Span{\operatorname{span}\,}
\def\degree{\operatorname{degree}\,}
\def\kernel{\operatorname{kernel}\,}
\def\dim{\operatorname{dim}\,}
\def\codim{\operatorname{codim}}
\def\trace{\operatorname{trace\,}}
\def\Span{\operatorname{span}\,}
\def\dimension{\operatorname{dimension}\,}
\def\codimension{\operatorname{codimension}\,}
\def\nullspace{\scriptk}
\def\kernel{\operatorname{Ker}}
\def\ZZ{ {\mathbb Z} }
\def\p{\partial}
\def\rp{{ ^{-1} }}
\def\Re{\operatorname{Re\,} }
\def\Im{\operatorname{Im\,} }
\def\ov{\overline}
\def\eps{\varepsilon}
\def\lt{L^2}
\def\diver{\operatorname{div}}
\def\curl{\operatorname{curl}}
\def\etta{\eta}
\newcommand{\norm}[1]{ \|  #1 \|}
\def\expect{\mathbb E}
\def\bull{$\bullet$\ }
\def\det{\operatorname{det}}
\def\Det{\operatorname{Det}}
\def\multiR{\mathbf R}
\def\bestA{\mathbf A}
\def\Apq{\mathbf A_{p,q}}
\def\Apqr{\mathbf A_{p,q,r}}
\def\rank{\mathbf r}
\def\diameter{\operatorname{diameter}}
\def\bp{\mathbf p}
\def\bff{\mathbf f}
\def\bg{\mathbf g}
\def\essd{\operatorname{essential\ diameter}}
\def\mab{\max(|A|,|B|)}

\newcommand{\abr}[1]{ \langle  #1 \rangle}

\newcommand{\Norm}[1]{ \Big\|  #1 \Big\| }
\newcommand{\set}[1]{ \left\{ #1 \right\} }
\def\one{{\mathbf 1}}
\newcommand{\modulo}[2]{[#1]_{#2}}

\def\scriptf{{\mathcal F}}
\def\scriptq{{\mathcal Q}}
\def\scriptg{{\mathcal G}}
\def\scriptm{{\mathcal M}}
\def\scriptb{{\mathcal B}}
\def\scriptc{{\mathcal C}}
\def\scriptt{{\mathcal T}}
\def\scripti{{\mathcal I}}
\def\scripte{{\mathcal E}}
\def\scriptv{{\mathcal V}}
\def\scriptw{{\mathcal W}}
\def\scriptu{{\mathcal U}}
\def\scriptS{{\mathcal S}}
\def\scripta{{\mathcal A}}
\def\scriptr{{\mathcal R}}
\def\scripto{{\mathcal O}}
\def\scripth{{\mathcal H}}
\def\scriptd{{\mathcal D}}
\def\scriptl{{\mathcal L}}
\def\scriptn{{\mathcal N}}
\def\scriptp{{\mathcal P}}
\def\scriptk{{\mathcal K}}
\def\scriptP{{\mathcal P}}
\def\scriptj{{\mathcal J}}
\def\frakv{{\mathfrak V}}
\def\frakG{{\mathfrak G}}
\def\frakA{{\mathfrak A}}
\def\frakB{{\mathfrak B}}
\def\frakC{{\mathfrak C}}

\author{Michael Christ}
\address{
        Michael Christ\\
        Department of Mathematics\\
        University of California \\
        Berkeley, CA 94720-3840, USA}
\email{mchrist@math.berkeley.edu}
\thanks{Research supported in part by NSF grant DMS-0901569 and by the Mathematical Sciences Research Institute.}


\date{December 20, 2011}

\title {Near--Extremizers of Young's Inequality for $\reals^d$}
\begin{abstract}
Any pair of functions which nearly extremizes Young's convolution inequality
$\norm{f*g}_t\le A\norm{f}_p\norm{g}_q$ for $\reals^d$, 
is close in norm to a pair which is an exact extremizer.
\end{abstract}
\maketitle

\section{Introduction}
Young's convolution inequality for $\reals^d$ states that if $(p,q)\in[1,\infty]$,
if $\rho^{-1}=p^{-1}+q^{-1}-1$, and if $\rho\in[1,\infty]$ then
\begin{equation}
\norm{f*g}_{\rho}\le \norm{f}_p\norm{g}_q 
\end{equation}
where $\norm{\cdot}_s$ denotes the Lebesgue $L^s(\reals^d)$ norm.
The convolution product is $f*g(x)=\int_{\reals^d} f(x-y)g(y)\,dy$.

The inequality holds in this form for any locally compact Abelian group if Haar measure is used
to define the convolution and $L^s$ norms.
But for $\reals^d$, it holds in a sharper form, established by Beckner \cite{beckner} and Brascamp-Lieb \cite{brascamplieb}:
\begin{equation} \label{lessyoung} \norm{f*g}_{\rho}\le {\mathbf A}_{p,q}^d\norm{f}_p\norm{g}_q \end{equation}
where
${\mathbf A}_{p,q} = C_pC_qC_{\rho'}$ with $C_p^2=\frac{p^{1/p}}{(p')^{1/p'}}$.
This number, which is strictly less than one,
is the optimal constant. Moreover, equality is realized only if there exist an invertible affine
endomorphism $L$ of $\reals^d$ and vectors $u,v\in\reals^d$ such that
$f(x)$ is a nonzero scalar multiple of $e^{-|L(x-u)|^2}$
and 
$g(x)$ is a nonzero scalar multiple of $e^{-\gamma|L(x-v)|^2}$
where $\gamma$ is a positive constant which depends only on $p,q$ \cite{beckner},\cite{brascamplieb}.
In this situation we say that $(f,g)$ is an extremizing pair for the inequality \eqref{lessyoung}.

In this paper we characterize pairs of functions $(f,g)\in L^p\times L^q$ which nearly realize this
optimal constant, demonstrating that near equality can only occur for functions which are close
in norm to extremizers.

\begin{theorem}\label{thm:main}
Let $d\ge 1$. Let $p,q\in(1,\infty)$, and suppose that the exponent $\rho$ defined by
$\rho^{-1}=p^{-1}+q^{-1}-1$ belongs to $(1,\infty)$. 
For any $\eps>0$ there exists $\delta>0$ such that
for any $\complex$--valued pair $(f,g)\in (L^p\times L^q)(\reals^d)$ of functions with nonzero norms,
if 
\begin{equation}\norm{f*g}_{\rho}\ge (1-\delta){\mathbf A}_{p,q}^d\norm{f}_p\norm{g}_q\end{equation} 
then there exists an extremizing pair $(F,G)$ such that
\begin{equation} \norm{f-F}_p\le\eps\norm{f}_p \text{ and } \norm{g-G}_q\le\eps\norm{g}_q.\end{equation} 
\end{theorem}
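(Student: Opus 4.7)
My approach is to argue by contradiction. Suppose the conclusion fails for some fixed $\eps>0$; then after using the homogeneity of \eqref{lessyoung} under scalar multiplication to normalize $\|f_n\|_p=\|g_n\|_q=1$, there is a sequence $(f_n,g_n)$ with $\|f_n*g_n\|_\rho\to\Apq^d$ but $\|f_n-F\|_p+\|g_n-G\|_q\ge\eps$ for every extremizing pair $(F,G)$ and every $n$. The goal is to produce a subsequence that converges, after application of the symmetries of \eqref{lessyoung}, to a Gaussian extremizing pair, contradicting this lower bound. The symmetries preserving the ratio of the two sides are: complex phases; joint dilations $f(x)\mapsto r^{d/p}f(rx)$ and $g(x)\mapsto r^{d/q}g(rx)$; independent translations; joint modulations $(f,g)\mapsto(e^{i\xi\cdot x}f,\,e^{i\xi\cdot x}g)$; and the action of $GL(d,\reals)$ by $(f,g)\mapsto(f\circ L,\,g\circ L)$. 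Any extremizer arising in the argument may be moved by these symmetries before being compared with $(f_n,g_n)$.

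The next step is a reduction to nonnegative, radially symmetric, decreasing $f_n,g_n$. The pointwise bound $|f_n*g_n|\le|f_n|*|g_n|$ and the Riesz rearrangement inequality $\||f_n|*|g_n|\|_\rho\le\||f_n|^**|g_n|^*\|_\rho$ together show that $(|f_n|^*,|g_n|^*)$ is again a near-extremizing sequence. If the theorem is proved for this reduced sequence, stability versions of both inequalities recover the theorem for $(f_n,g_n)$: near-equality in the modulus inequality forces a common modulation, so that $f_n(x)\approx e^{i\xi_n\cdot x+i\alpha_n}|f_n(x)|$ and $g_n(x)\approx e^{i\xi_n\cdot x+i\beta_n}|g_n(x)|$ for suitable $\xi_n\in\reals^d$ and $\alpha_n,\beta_n\in\reals$, while near-equality in Riesz rearrangement forces $|f_n|,|g_n|$ to be close in norm to translates of $|f_n|^*,|g_n|^*$.

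The heart of the argument is a concentration-compactness analysis applied to the reduced radially decreasing near-extremizing sequence $(|f_n|^*,|g_n|^*)$. After a joint dilation fixing the scale (for instance, so that $|\{|f_n|^*\ge 1\}|=1$), I would extract a subsequence with $|f_n|^*\rightharpoonup F_0$ weakly in $L^p$ and $|g_n|^*\rightharpoonup G_0$ weakly in $L^q$. The essential step, and the one I expect to require the most effort, is to rule out loss of mass in the weak limit --- that is, to prevent the sequence from splitting into pieces living at widely separated scales. The method I would follow is to decompose $|f_n|^*$ and $|g_n|^*$ dyadically in scale, apply Young's inequality to each cross-pairing of bands, and observe that any genuine splitting of mass across two well-separated scales forces $\|f_n*g_n\|_\rho\le(1-c)\Apq^d$ for some $c>0$, using the strict concavity of $x\mapsto x^{1/\rho}$ applied to the resulting sum of $L^\rho$ contributions from essentially disjoint annular regions. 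This parallels Lieb's concentration-compactness analysis of Hardy--Littlewood--Sobolev.

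Once no mass is lost, $\|F_0\|_p=\|G_0\|_q=1$, and by uniform convexity of $L^p$ and $L^q$ the weak convergence upgrades to strong convergence $|f_n|^*\to F_0$ in $L^p$ and $|g_n|^*\to G_0$ in $L^q$. Continuity of the convolution then gives $\|F_0*G_0\|_\rho=\Apq^d\|F_0\|_p\|G_0\|_q$, and by the Beckner--Brascamp--Lieb characterization $(F_0,G_0)$ is a Gaussian extremizing pair. Composing the translations, dilation, modulation, and scalar phase extracted in the reductions of the second paragraph produces an explicit extremizing pair $(F_n,G_n)$ with $\|f_n-F_n\|_p+\|g_n-G_n\|_q\to 0$, contradicting the bad-sequence hypothesis and completing the proof.
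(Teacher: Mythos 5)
Your high-level plan (normalize, rearrange, compactness, conclude) is a reasonable skeleton, but it treats as black boxes precisely the two results that constitute the technical content of the paper, so as written there is a genuine gap. The critical sentence is ``near-equality in Riesz rearrangement forces $|f_n|,|g_n|$ to be close in norm to translates of $|f_n|^\star,|g_n|^\star$.'' No such stability theorem for the Riesz--Sobolev inequality with three \emph{general} nonnegative functions was available; even the equality case (Burchard) is known only for indicator functions, and the quantitative version for indicator functions that does exist requires near-equality to be tested against \emph{two} sets $E,E'$ of comparable size rather than one. Turning that restricted inverse theorem into a statement about three general $L^p$ functions is exactly what Sections~3--5 of the paper do, via the layer-cake decomposition $f=\int_0^\infty \one_{F_\alpha}\,d\alpha$, a comparison of the distribution functions of $f$ with those of the Gaussian limit of $f^\star$, and a careful argument showing that the superlevel sets $F_\alpha$ are individually close to intervals (after which precompactness of $f$ modulo translation follows from bounded-variation considerations). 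By invoking ``stability of Riesz rearrangement'' as a known tool, you have skipped the heart of the proof.

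A secondary but real gap is the claim that near-equality in $|f*g|\le|f|*|g|$ ``forces a common modulation'' with an affine phase $e^{i\xi\cdot x+i\alpha}$. What one actually gets directly is that $\alpha(x)+\beta(y)-\gamma(x+y)\approx 0$ on most of a large box, where $\alpha,\beta,\gamma$ are the (merely measurable, possibly unbounded) phases of $f,g,f*g$. Passing from this approximate Cauchy functional equation to an approximately affine phase is not routine; the paper devotes Section~6 to it (Propositions~\ref{prop:nearlylinear} and \ref{prop:nearlycharacter} and Lemma~\ref{lemma:threefunctions}), and the absence of any integrability hypothesis on the phases rules out the usual mollification tricks. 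Your concentration-compactness step for the radially decreasing sequence is roughly in line with what the paper does (via the dyadic level-set normalization of Lemma~\ref{lemma:crudelevelsetdecomp} and Proposition~\ref{prop:starprecompact}), and the induction on dimension is also similar in spirit, but the two ``stability'' ingredients above cannot be assumed; they are where the work lies.
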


This conclusion is false if one or more of the three exponents equal $1$ or $\infty$.
Eisner and Tao \cite{eisnertao} have shown that for arbitrary locally compact Abelian groups, if the
optimal constant equals $1$, then all
near-extremizers are close to scalar multiples of indicator functions of cosets of compact open subgroups. 
For discrete groups, the optimal constant in the inequality does equal $1$.
It is shown in \cite{youngdiscrete} that for any discrete group with no nontrivial finite subgroups,
near-extremizers must be close to extremizers.  However, these analyses do not apply when the
optimal constant is strictly less than $1$.

The crux of the matter is the one-dimensional case, for nonnegative functions. 
The extension to complex-valued functions and to higher dimensions follows from supplementary arguments
which begin in \S\ref{section:extensions}. 

Our analysis of the one-dimensional case relies on an approximate inverse rearrangement inequality.
Write $\langle f,h\rangle = \int_{\reals^d}fg$.
Denote by $f^\star$ the radially symmetric 
nonincreasing rearrangement of a nonnegative function $f$ with domain $\reals^d$.
The rearrangement inequality of Riesz \cite{riesz} and Sobolev \cite{sobolev} 
states that for nonnegative $f,g,h$,
$\langle f*g,h\rangle \le\langle f^\star*g^\star,h^\star\rangle$.
See \cite{liebloss} for a proof and discussion.
For dimension $d=1$,
Burchard \cite{burchard} proved that if $f,g,h$ are indicator functions of
sets with positive, finite measures satisfying certain natural inequalities,
and if equality holds in the Riesz-Sobolev inequality for these functions,
then the three sets in question must differ from intervals by null sets. 

The approximate inverse rearrangement inequality \cite{christrieszsobolev} used here also applies to
indicator functions of sets, with a hypothesis of near equality in the form
$\langle f*g,h\rangle \ge(1-\eps)\langle f^\star*g^\star,h^\star\rangle$
and a correspondingly weaker conclusion of the form
$|A\bigtriangleup I|\le\delta |A|$ for some interval $I\subset\reals^1$,
where $f=\one_A$, where $\delta$ is a function of $\eps$ which tends to $0$ as $\eps$ tends to $0$.
Here $A\bigtriangleup B$ denotes the symmetric difference of two sets.
A weakness of this result is that it presupposes near equality
not for a single indicator function $h=\one_S$, but rather, for two such functions. But as will be 
seen below, our analysis leads naturally to a situation in which near equality is known to hold
for infinitely many sets $S$, in a strong sense.

The following notational conventions will be used throughout the paper. 
We will consistently use the quantity $\delta$ to quantify the degree to which
a pair or triple of functions nearly extremizes Young's inequality.
For other quantities $\eta$ which arise in the discussion, the notation $\eta=o_\delta(1)$
will mean that $\eta\le \psi (\delta)$ where $\psi(\delta)\to 0$ as $\delta\to 0$.
The function $\psi$ may depend on the dimension $d$, 
and sometimes on other parameters which will be indicated,
but otherwise will depend only on
$\delta$, perhaps through auxiliary quantities chosen in the course of the proof which themselves
depend only on $\delta,d$.
An ordered triple $(f,g,h)$ is said to be close to an ordered triple $(\tilde f,\tilde g,\tilde h)$
in $L^p\times L^q\times L^r$ if $\norm{f-\tilde f}_p$ is small and if
$g,h$ are likewise close to $\tilde g,\tilde h$ in $L^q,L^r$ respectively.
A triple $(f,g,h)$ is said to be $\delta$--close to $(\tilde f,\tilde g,\tilde h)$ in $L^p\times L^q\times L^r$
if $\norm{f-\tilde f}_p<\delta$, $\norm{g-\tilde g}_q<\delta$, and $\norm{h-\tilde h}_r<\delta$.

The author thanks Terence Tao for a stimulating conversation. 

\section{Normalized extremizing sequences}

Let $p,q,r$ satisfy 
\begin{align}
\label{H1}
p,q,r\in(1,\infty)
\\
\label{H2}
p^{-1}+q^{-1}+r^{-1}=2.
\end{align}
These hypotheses will be in force throughout the discussion.
Let $r'=r/(r-1)$ be the exponent conjugate to $r$.

Let $d\ge 1$.
It will be convenient to write Young's inequality in the more symmetric form
\begin{equation} \big|\langle f*g,h\rangle\big| \le \Apqr^d\norm{f}_p\norm{g}_q\norm{h}_r \end{equation}
where
$\Apqr=\Apq = C_pC_qC_r$, where $C_s^2= \frac{s^{1/s}}{t^{1/t}}$ with $t=s'$ the exponent 
conjugate to $s$.

\begin{lemma} \label{lemma:comparablemeasures}
There exist $C<\infty$ and $\gamma>0$, depending on $(p,q,r)$, such that
for any measurable sets $E,E'\subset\reals^d$ with positive and finite Lebesgue measures,
\begin{equation} \norm{\one_E*\one_{E'}}_{r'}\le C\min\Big(\frac{|E|}{|E'|},\frac{|E'|}{|E|}\Big)^\gamma |E|^{1/p}|E'|^{1/q}. 
\end{equation}
\end{lemma}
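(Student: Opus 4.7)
The strategy is to exploit the fact that the hypothesis $p^{-1}+q^{-1}+r^{-1}=2$ is equivalent to $p^{-1}+q^{-1}=1+(r')^{-1}$, so that $(1/p,1/q)$ lies strictly inside the segment of admissible pairs $(1/p_0,1/q_0)$ satisfying Young's exponent relation $p_0^{-1}+q_0^{-1}=1+(r')^{-1}$ with $p_0,q_0\in[1,\infty]$. The endpoints of this segment are $(1,1/r')$ and $(1/r',1)$. My plan is simply to apply the (non-sharp) Young inequality at each endpoint rather than at the interior pair $(1/p,1/q)$; each endpoint estimate will automatically contain a gain in the ratio $|E|/|E'|$.

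At the endpoint $(p_0,q_0)=(1,r')$, Young gives
\[
\norm{\one_E*\one_{E'}}_{r'}\le \norm{\one_E}_1\norm{\one_{E'}}_{r'}=|E|\cdot|E'|^{1/r'},
\]
and at $(r',1)$ one obtains the symmetric bound $\norm{\one_E*\one_{E'}}_{r'}\le |E|^{1/r'}\cdot|E'|$. Using the identity $1/q-1/r'=1-1/p=1/p'$, which is immediate from the exponent relation, these two bounds rewrite as
\[
\Big(\frac{|E|}{|E'|}\Big)^{1/p'}|E|^{1/p}|E'|^{1/q}\qquad\text{and}\qquad \Big(\frac{|E'|}{|E|}\Big)^{1/q'}|E|^{1/p}|E'|^{1/q},
\]
respectively.

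To conclude, set $\gamma=\min(1/p',1/q')$, which is strictly positive because $p,q\in(1,\infty)$. In the case $|E|\le|E'|$ one has $|E|/|E'|\le 1$, so lowering the exponent from $1/p'$ to $\gamma$ only weakens the first bound, giving the desired inequality with $C=1$; in the opposite case $|E|\ge|E'|$ the second bound does the same job. Since $\min(|E|/|E'|,|E'|/|E|)$ is precisely the ratio that is at most $1$ in each case, the two endpoint estimates combine to yield the lemma.

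There is no serious obstacle to this argument; it is essentially algebraic once one notices that the gain $\min(|E|/|E'|,|E'|/|E|)^\gamma$ arises automatically from applying Young's inequality at the extreme admissible exponents instead of at the interior pair $(p,q)$, and that no use of the sharp constant $\Apqr$ is needed.
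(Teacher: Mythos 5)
Your argument is correct, and it is surely what the author had in mind: the paper simply labels this lemma ``Trivial'' and gives no proof, so there is nothing to compare against. The endpoint computation, the identity $1/q - 1/r' = 1/p'$ (and its symmetric counterpart), and the choice $\gamma = \min(1/p', 1/q') > 0$ (positive because $p,q \in (1,\infty)$) are all exactly right, and the case split on whether $|E| \le |E'|$ correctly selects the bound that produces the small ratio, so the lemma holds with $C=1$.
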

\begin{proof} Trivial.\end{proof}

Let $f$ be a measurable function which is finite almost everywhere.
There exist a unique decomposition
\begin{equation} f= \sum_{j\in\integers}  2^j F_j\end{equation}
and associated pairwise disjoint sets $\scriptf_j$ satisfying
\begin{equation} \one_{\scriptf_j}\le |F_j|<2\one_{\scriptf_j}.  \end{equation}
For $g,h$ there are corresponding decompositions in terms of functions $G_j,H_j$ associated to sets $\scriptg_j,\scripth_j$,
respectively.

\begin{lemma}\label{lemma:crudelevelsetdecomp}
There exist finite positive constants  $c_0,C_0$ and positive functions $\theta,\Theta$
such that $\theta(t)\to 0$ as $t\to \infty$ and $\Theta(\delta)\to\infty$ as $\delta\to 0$, with the following properties. 
Let $(f,g,h)\in (L^p\times L^q\times L^r)(\reals^d)$ satisfy $\norm{f}_p=\norm{g}_q=\norm{h}_r=1$
and $\langle f*g,h\rangle \ge (1-\delta)\Apqr^d$.  Let $F_j,\scriptf_j,G_j,\scriptg_j,H_j,\scripth_j$ 
be the functions and sets associated to $f,g,h$ respectively.
Then there exist $k,k',k''\in\integers$  such that
\begin{align} 2^k|\scriptf_k|^{1/p}&\ge c_0 \\
2^j|\scriptf_j|^{1/p}&\le\theta(|j-k|)  \text{ whenever $|j-k|\le \Theta(\delta)$}
\end{align} 
with corresponding properties for $\set{\scriptg_j}$ relative to $k'$
and for $\set{\scripth_j}$ relative to $k''$.
Moreover
\begin{equation} |k-k'|+|k-k''|\le C_0.  \end{equation}
\end{lemma}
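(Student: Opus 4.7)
The approach is to reduce $(f,g,h)$ to the associated step functions $(A,B,C)$ with $A=\sum_j 2^j\one_{\scriptf_j}$ and similarly $B,C$, then expand the trilinear form over level-set triples and use Lemma~\ref{lemma:comparablemeasures} to penalize triples whose measures are mismatched. We may assume $f,g,h\ge 0$ since $|\langle f*g,h\rangle|\le\langle|f|*|g|,|h|\rangle$. Because $\one_{\scriptf_j}\le|F_j|<2\one_{\scriptf_j}$, one has $A\le f<2A$ pointwise (and analogues for $B,C$), so $\langle A*B,C\rangle\ge 8^{-1}(1-\delta)\Apqr^d$ while $\|A\|_p,\|B\|_q,\|C\|_r\asymp 1$. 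Write $\alpha_j=2^j|\scriptf_j|^{1/p}$, $\beta_{j'}=2^{j'}|\scriptg_{j'}|^{1/q}$, $\gamma_{j''}=2^{j''}|\scripth_{j''}|^{1/r}$; by construction $\sum_j\alpha_j^p\asymp\sum_{j'}\beta_{j'}^q\asymp\sum_{j''}\gamma_{j''}^r\asymp 1$.

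By disjointness of the level sets,
\[
\langle A*B,C\rangle=\sum_{j,j',j''}2^{j+j'+j''}\langle\one_{\scriptf_j}*\one_{\scriptg_{j'}},\one_{\scripth_{j''}}\rangle.
\]
Applying Lemma~\ref{lemma:comparablemeasures} to any one of the three pairs of level sets (which is permissible by the permutation symmetry of Young's inequality under duality) followed by H\"older against the remaining indicator yields
\[
2^{j+j'+j''}\langle\one_{\scriptf_j}*\one_{\scriptg_{j'}},\one_{\scripth_{j''}}\rangle\le C\alpha_j\beta_{j'}\gamma_{j''}\mu_{jj'j''}^{\gamma},
\]
where $\mu_{jj'j''}$ denotes the minimum, over the three pairs drawn from $\{\scriptf_j,\scriptg_{j'},\scripth_{j''}\}$, of $\min(|E|/|E'|,|E'|/|E|)$. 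The near-extremization hypothesis thus translates to the penalized lower bound $\sum_{j,j',j''}\alpha_j\beta_{j'}\gamma_{j''}\mu_{jj'j''}^{\gamma}\gtrsim 1$.

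To locate peak indices, observe that since $\sum\alpha_j^p\lesssim 1$, the ``big-index'' set $\scripti_f=\{j:\alpha_j\ge\eta\}$ has cardinality $\le\eta^{-p}$ for any $\eta>0$, and likewise for $\scripti_g,\scripti_h$. The contribution to the penalized sum from triples with some index outside its respective big set is $o_\eta(1)$: either one of $\alpha_j,\beta_{j'},\gamma_{j''}$ itself is below $\eta$, \emph{or} the penalty $\mu^{\gamma}$ is small because the corresponding small-measure level set is mismatched in size with the dominant ones. Choosing $\eta$ small enough, the dominant mass lies in a finite core of triples $(j,j',j'')\in\scripti_f\times\scripti_g\times\scripti_h$ for which $\mu_{jj'j''}\ge c_1$. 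A pigeonhole on this finite sum yields $(k,k',k'')$ with $\alpha_k,\beta_{k'},\gamma_{k''}\ge c_0$ and with $|\scriptf_k|,|\scriptg_{k'}|,|\scripth_{k''}|$ pairwise comparable; together with the lower bounds on $\alpha_k,\beta_{k'},\gamma_{k''}$, this forces $|k-k'|+|k-k''|\le C_0$.

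The decay bound $\alpha_j\le\theta(|j-k|)$ inside the growing window $|j-k|\le\Theta(\delta)$ is obtained by iterating this argument with finer thresholds. If some $\alpha_j$ with $|j-k|$ large were bounded below by $\eta$, then the triple $(j,k',k'')$ would add to the penalized sum, but the measure $|\scriptf_j|$ would be far from $|\scriptf_k|\asymp|\scriptg_{k'}|$, making $\mu_{j,k',k''}^{\gamma}$ correspondingly small; summed over such triples, this drains a definite amount from the total $\gtrsim 1$, bounded below by a function of $\eta$ and $|j-k|$. The main obstacle is making this tradeoff quantitative: choosing the threshold $\eta=\theta(|j-k|)$ and the window $\Theta(\delta)$ so that the cumulative drain from off-peak triples is $o_\delta(1)$, which requires partitioning the triple sum at all scales of imbalance and applying Lemma~\ref{lemma:comparablemeasures} at each scale.
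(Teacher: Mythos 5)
Your overall strategy --- expand $\langle A*B,C\rangle$ over dyadic level sets, penalize measure-mismatched triples via Lemma~\ref{lemma:comparablemeasures}, and pigeonhole --- is the natural one given the paper's pointer to Lemma~\ref{lemma:comparablemeasures}, and you set up the penalized trilinear sum correctly. But there are substantive gaps. The passage to step functions $A,B,C$ costs a fixed multiplicative factor (you note the $8^{-1}$), so the lower bound you extract is $\sum\alpha_j\beta_{j'}\gamma_{j''}\mu_{jj'j''}^{\gamma}\ge c$ with $c$ bounded strictly away from the sharp constant. That suffices to locate a peak index with $\alpha_k\ge c_0$, but it irretrievably discards the $(1-\delta)$ information, and therefore cannot by itself produce the $\delta$-dependent half of the conclusion: the decay $\alpha_j\le\theta(|j-k|)$ on a \emph{growing} window $|j-k|\le\Theta(\delta)$ with $\Theta(\delta)\to\infty$ as $\delta\to 0$. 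Moreover, the ``drain'' reasoning for the decay estimate has the logic inverted: if some $\alpha_j$ is large at $j$ far from $k$ and the penalty $\mu_{j,k',k''}^{\gamma}$ is therefore small, the corresponding \emph{term} in the penalized sum is small, and a small nonnegative term cannot, by itself, contradict a lower bound on a sum of nonnegative terms. What you actually need is budgetary: the $\ell^p$ mass $\alpha_j^p$ spent at far-away levels is wasted by the penalty, so the trilinear sum can only approach its maximum if nearly all the $\ell^p$ budget lies near $k$. Making that precise requires a matching \emph{upper} bound on the penalized sum (a Schur-test estimate exploiting the exponential decay of $\mu^{\gamma}$ in the measure mismatch, summed dyadically over mismatch scales) which exhibits the waste; you do not supply it, and you correctly flag it as ``the main obstacle'' --- it is the crux of the lemma, not a loose end.

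Separately, the final inference that comparability of $|\scriptf_k|,|\scriptg_{k'}|,|\scripth_{k''}|$ together with $\alpha_k,\beta_{k'},\gamma_{k''}\ge c_0$ forces $|k-k'|+|k-k''|\le C_0$ does not follow as written. From $\alpha_k\in[c_0,C]$ one gets $|\scriptf_k|\asymp 2^{-kp}$; from $\beta_{k'}\in[c_0,C]$ one gets $|\scriptg_{k'}|\asymp 2^{-k'q}$; comparability of those two measures therefore controls $|kp-k'q|$, not $|k-k'|$, and these differ unless $p=q$. Some further observation (or a normalization exploiting the dilation symmetry, as in the remark following the lemma) is needed before the conclusion can be stated in the asserted form.
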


The triple $(f,g,h)$ is therefore equivalent, under
the action of the affine symmetry group, to a pair with $k=0$ and $|k'|+|k''|\le C$. 
The lemma is a consequence of Lemma~\ref{lemma:comparablemeasures} via the  reasoning in  
\S5 and Lemma~6.1 of \cite{christparaboloidconvolutionextremizers}, so the proof is omitted. \qed

\begin{definition} 
Let $p,q,r\in(1,\infty)$.  Let $\Theta,R$ be functions which satisfy 
$\Theta(\rho)\to 0$ as $\rho\to\infty$ and $R(t)\to\infty$ as $t\to 0$.  
A function $f\in L^p$ is said to be $\eta$--normalized with exponent $p$
with respect to $\Theta,R$ if $\norm{f}_p=1$ and
\begin{align} &\int_{|\tilde f(x)|> \rho} \tilde f(x)^p\,dx\le\Theta(\rho) \text{ for all $\rho\le R(\eta)$}
\label{normalized1} \\
&\int_{|\tilde f(x)|<\rho^{-1}} \tilde f(x)^p\,dx\le\Theta(\rho) \text{ for all $\rho\le R(\eta)$}.
\label{normalized2} \end{align}
\end{definition}
We will most often omit mention of the exponent $p$ when referring to this definition.
Given an ordered triple of exponents $(p,q,r)$,
we will say that a triple of functions $(f,g,h)$ is $\eta$--normalized with
respect to $\Theta,R$ if each of the three functions is normalized with the corresponding exponent.

\begin{definition}
An extremizing sequence for Young's inequality for an ordered triple of exponents $(p,q,r)$
is a sequence of elements $(f_\nu,g_\nu,h_\nu)\in L^p\times L^q\times L^r$
such that $\norm{f_\nu}_p\equiv\norm{g_\nu}_q\equiv\norm{h_\nu}_r\equiv 1$
and $\langle f_\nu*g_\nu,h_\nu\rangle \to \Apqr$.
\end{definition}

\begin{definition}
Let $(p,q,r)$ satisfy \eqref{H1},\eqref{H2}.  An extremizing sequence $(f_\nu,g_\nu,h_\nu)$
is said to be normalized if there exist functions $\Theta,R$  and a sequence $(\eta_\nu)$
of positive real numbers satisfying $\lim_{\nu\to\infty} \eta_\nu=0$, 
$\Theta(\rho)\to 0$ as $\rho\to\infty$, and $R(t)\to\infty$ as $t\to 0$, 
such that for each index $\nu$,
the functions $f\_nu,g_\nu,h_\nu$ are $\eta_\nu$--normalized with respect to $\Theta,R$
with exponents $p,q,r$ respectively.
\end{definition}

In these terms, Lemma~\ref{lemma:crudelevelsetdecomp} can be restated thusly:
\begin{proposition} \label{prop:normalized}
Let $p,q,r$ satisfy \eqref{H1},\eqref{H2}.
There exist functions $\Theta,R$ satisfying $\Theta(\rho)\to 0$ as $\rho\to\infty$
and $R(t)\to\infty$ as $t\to 0$ with the following property.
Let $\delta>0$ be arbitrary.
Let $(f,g,h)\in L^p\times L^q\times L^r$ be any $(1-\delta)$--nearly extremizing triple satisfying
$\norm{f}_p= \norm{g}_q= \norm{h}_r=1$.  There exists $\lambda\in\reals^+$  such that the functions 
$\tilde f(x)=\lambda^{1/p}f(\lambda (x))$, $\tilde g(x)=\lambda^{1/q}g(\lambda  (x) )$, 
$\tilde h(x)=\lambda^{1/r}h(\lambda  (x))$
are $\delta$--normalized with respect to the functions $\Theta,R$, with exponents $p,q,r$ respectively.
\end{proposition}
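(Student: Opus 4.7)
The plan is to obtain Proposition~\ref{prop:normalized} as a direct reformulation of Lemma~\ref{lemma:crudelevelsetdecomp}, packaging the integer-level data from the lemma into the integral concentration property of the definition via a single rescaling. I would first apply Lemma~\ref{lemma:crudelevelsetdecomp} to the triple $(f,g,h)$, obtaining integers $k,k',k''$ with $2^k|\scriptf_k|^{1/p}\ge c_0$, the pointwise level bound $2^j|\scriptf_j|^{1/p}\le\theta(|j-k|)$ for $|j-k|\le\Theta_0(\delta)$, analogous statements for $g,h$, and the constraint $|k-k'|+|k-k''|\le C_0$. Here I write $\Theta_0,\theta$ for the two functions supplied by the lemma. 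This last inequality is decisive: it permits a single isotropic dilation to center the dominant levels of all three functions simultaneously, up to a bounded shift.

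Next I would choose $\lambda$ so that under the rescaling $\tilde f(x)=\lambda^{1/p}f(\lambda(x))$ the dominant level index of $\tilde f$ is $0$. A direct computation shows that the quantity $2^j|\tilde\scriptf_j|^{1/p}$ at index $j$ for $\tilde f$ coincides with $2^{j+k}|\scriptf_{j+k}|^{1/p}$ for $f$, so the pointwise estimate becomes $2^j|\tilde\scriptf_j|^{1/p}\le\theta(|j|)$ for $|j|\le\Theta_0(\delta)$. The same $\lambda$ applied to $g,h$ (with exponents $1/q,1/r$) shifts their dominant indices to $k'-k,k''-k$, each of magnitude at most $C_0$; the corresponding level bounds persist with an argument shift absorbed into a universal modulus of decay.

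To derive the integral bounds in the definition of $\eta$--normalization, I would sum:
\begin{equation*}
\int_{|\tilde f|>2^J}|\tilde f|^p\le C\sum_{j\ge J}2^{jp}|\tilde\scriptf_j|=C\sum_{j\ge J}\bigl(2^j|\tilde\scriptf_j|^{1/p}\bigr)^p\le C\sum_{j\ge J}\theta(|j|)^p,
\end{equation*}
valid on $|J|\le\Theta_0(\delta)$, together with the analogous lower-tail estimate for $\int_{|\tilde f|<2^{-J}}|\tilde f|^p$. Setting $R(\delta)=2^{\Theta_0(\delta)/2}$ and $\Theta(\rho)=C\sum_{|j|\ge\log_2\rho}\theta(|j|)^p$, arranged so that the series converges (for instance by replacing $\theta(t)$ with $\max(\theta(t),t^{-2/p})$, which still tends to $0$), supplies universal functions with $\Theta(\rho)\to 0$ and $R(\delta)\to\infty$ as required, and verifies $\delta$--normalization of $\tilde f$; the same argument handles $\tilde g,\tilde h$. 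The main obstacle is purely clerical: ensuring summability of $\theta(j)^p$ uniformly in $\delta$ and choosing $R(\delta)$ conservatively enough that the analysis remains in the range $|j|\le\Theta_0(\delta)$ controlled by the lemma, so that the otherwise-uncontrolled tail $\sum_{j>\Theta_0(\delta)}2^{jp}|\tilde\scriptf_j|$, bounded in general only by $\|\tilde f\|_p^p=1$, does not contaminate the estimate.
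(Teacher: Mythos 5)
The outline you propose---rescaling by $\lambda=2^{-k}$ so the dominant dyadic level shifts to $0$, then converting the level-set estimates of Lemma~\ref{lemma:crudelevelsetdecomp} into the integral tail bounds of the normalization definition---is exactly the intended content of the paper's one-line observation that the proposition ``restates'' that lemma. So in outline you and the paper agree. But there are two substantive problems with the way you propose to close the gap.

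First, your summability repair is inverted. Replacing $\theta(t)$ by $\max(\theta(t),t^{-2/p})$ produces a \emph{larger} majorant, and if $\sum_j \theta(j)^p$ diverges, then a fortiori $\sum_j \max(\theta(j),j^{-2/p})^p$ diverges. Enlarging the upper bound cannot create convergence. The hypothesis $\theta(t)\to 0$ in Lemma~\ref{lemma:crudelevelsetdecomp} does not by itself give $\sum_j\theta(j)^p<\infty$ (e.g.\ $\theta(t)=(\log t)^{-1}$), so if you want a universal $\Theta(\rho)=\sum_{j\ge \log_2\rho}\theta(j)^p$ tending to $0$, you need to know more about $\theta$ than the lemma states.

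Second, and more seriously, the ``uncontrolled tail'' $\sum_{j>\Theta_0(\delta)}2^{jp}|\tilde\scriptf_j|$ which you flag as ``purely clerical'' is a genuine logical gap that a conservative choice of $R(\delta)$ cannot fix. For any $\rho\le R(\delta)$ the quantity $\int_{|\tilde f|>\rho}\tilde f^p$ sums over \emph{all} $j\ge \log_2\rho$, not merely those with $j\le\Theta_0(\delta)$; the integral always picks up the tail beyond the controlled range regardless of how slowly $R(\delta)$ grows. And Lemma~\ref{lemma:crudelevelsetdecomp} as written simply gives no bound there: a profile with $2^{jp}|\tilde\scriptf_j|=c_0^p$ at $j=0$, zero for $0<|j|\le\Theta_0(\delta)$, and a constant amount of mass at $j=\Theta_0(\delta)+1$ satisfies every stated conclusion of the lemma yet fails to be $\delta$--normalized for any universal $\Theta,R$. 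So the lemma's \emph{statement} does not imply Proposition~\ref{prop:normalized}; the implication depends on additional concentration information (that the mass outside $|j-k|\le\Theta_0(\delta)$ is $o_\delta(1)$) which is established in the proof of Lemma~\ref{lemma:crudelevelsetdecomp} in the cited reference but not recorded in the lemma as quoted. To be fair, the paper itself does not supply a proof of the proposition either---it only asserts the restatement---so your attempt is consistent with the paper's level of detail; but the claim that the remaining obstacle is clerical, and the maneuvers you propose to discharge it, are not correct.
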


We will prove below:
\begin{proposition} \label{prop:nonnegativecase} 
Let $(p,q,r)$ satisfy \eqref{H1},\eqref{H2}.
For any $\eps>0$ there exists $\delta>0$ with the following property.
If $(f,g,h)\in (L^p\times L^q\times L^r)(\reals^1)$ is a $(1-\delta)$--near extremizer
for Young's inequality which is $\delta$--normalized,
then there exist $a,b\in\reals$ such that 
$(f(x-a),g(x-b),h(x-a-b))$ is $\eps(\delta)$--close to an extremizing triple.
\end{proposition}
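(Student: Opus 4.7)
My plan is to reduce near-extremality in Young's inequality to near-equality in the Riesz-Sobolev inequality, apply the approximate inverse Riesz-Sobolev of \cite{christrieszsobolev} to superlevel sets produced by a layer-cake decomposition, and then identify the resulting symmetric-decreasing near-extremizer with a Gaussian by compactness.

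Because $\langle f*g,h\rangle \le \langle f^\star*g^\star,h^\star\rangle \le \Apqr$, the hypothesis $\langle f*g,h\rangle \ge (1-\delta)\Apqr$ forces near-equality in Riesz-Sobolev, $\langle f*g,h\rangle \ge (1-\delta)\langle f^\star*g^\star,h^\star\rangle$, and the rearranged triple is itself a $(1-\delta)$-near extremizer of Young's. I would use the layer-cake representation to rewrite the Riesz-Sobolev defect as
\[
\iiint \bigl[\langle \one_{E_s}^\star * \one_{F_t}^\star,\one_{G_u}^\star\rangle - \langle \one_{E_s} * \one_{F_t},\one_{G_u}\rangle\bigr]\,ds\,dt\,du = O(\delta),
\]
with $E_s=\{f>s\}$, $F_t=\{g>t\}$, $G_u=\{h>u\}$. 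The $\delta$-normalization confines the bulk of this integral to a bounded region in $(s,t,u)$ where $|E_s|,|F_t|,|G_u|$ are uniformly comparable, and a pigeonhole argument produces a near-full-measure subset of such $(s,t,u)$ on which the indicator-wise defect is a small fraction of $\langle \one_{E_s}^\star * \one_{F_t}^\star,\one_{G_u}^\star\rangle$.

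For each such $(s,t,u)$ I would apply the approximate inverse Riesz-Sobolev to produce intervals $I_s,J_t,K_u$ with $|E_s \bigtriangleup I_s|,|F_t \bigtriangleup J_t|,|G_u \bigtriangleup K_u|$ all $o_\delta(1)$ times the respective measures. The weakness noted in the introduction---that the approximate inverse presupposes near equality for two different indicator functions---is harmless here because layer cake furnishes a continuum of admissible $G_u$ to play off against each other. The approximating interval for a given set is essentially determined by the set's center of mass, and the convolution structure imposes the constraint $c(K_u)=c(I_s)+c(J_t)$ on the centers. Fixing two admissible levels $(t,u)$ and varying $s$ then forces $c(I_s)$ to be independent of $s$, and symmetric reasoning does the same for $J_t$ and $K_u$. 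Taking $a,b$ to be these common centers and translating $(f,g,h)\mapsto(f(\cdot-a),g(\cdot-b),h(\cdot-a-b))$ places all superlevel sets concentrically at the origin, and reassembling through layer cake yields $\norm{f(\cdot-a)-f^\star}_p=o_\delta(1)$ and similarly for $g,h$.

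It remains to compare $(f^\star,g^\star,h^\star)$ with a Gaussian extremizing triple. The symmetric-decreasing, $\delta$-normalized, $(1-o_\delta(1))$-nearly extremizing triple is handled by compactness: uniform integrability and uniform non-vanishing guaranteed by normalization, together with the monotonicity of symmetric-decreasing functions, yield precompactness in $L^p\times L^q\times L^r$; any limit is an extremizer and hence a Gaussian triple by \cite{beckner},\cite{brascamplieb}, which establishes the required closeness. The main obstacle is the consistency step above: extracting single translation parameters $a,b$ simultaneously from the continuum of intervals $\{I_s\},\{J_t\},\{K_u\}$ produced at different levels. Each application of the approximate inverse Riesz-Sobolev is a set-level statement with unavoidable translation ambiguity, and the convolution constraint $c(K_u)=c(I_s)+c(J_t)$ is the lever that breaks it; turning this into a quantitative estimate that survives the $o_\delta(1)$ errors across a continuum of levels, then integrates through layer cake to $L^p$-level closeness, is the delicate heart of the argument.
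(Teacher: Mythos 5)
Your overall strategy — reduce to near-equality in Riesz-Sobolev, layer-cake the defect, invoke the approximate inverse rearrangement theorem to make the superlevel sets nearly intervals, and then close via compactness — does match the paper's architecture. The divergence, and the weak spot you yourself flag, is in how a single pair of translation parameters $(a,b)$ is extracted from the continuum of approximating intervals $\{I_s\},\{J_t\},\{K_u\}$.

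Your mechanism is to assert a constraint $c(K_u)\approx c(I_s)+c(J_t)$ on interval centers and deduce that $c(I_s)$ is independent of $s$. But the approximate inverse Riesz-Sobolev theorem as used here only produces an interval $I_s$ with $|E_s\bigtriangleup I_s|=o_\delta(1)|E_s|$; it says nothing about where the interval sits relative to $J_t$ and $K_u$. To get the center constraint you would need a further quantitative lemma stating that near-equality in $\langle\one_I*\one_J,\one_K\rangle\le\langle\one_{I^\star}*\one_{J^\star},\one_{K^\star}\rangle$ for actual intervals forces $c(K)\approx c(I)+c(J)$, and you would then have to propagate the $o_\delta(1)$ errors from $E_s\mapsto I_s$ through this constraint across all relevant levels simultaneously. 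This is exactly the step you call the ``delicate heart,'' and it is not carried out. Your further claim that concentricity of the superlevel intervals then yields $\norm{f(\cdot-a)-f^\star}_p=o_\delta(1)$ is also stronger than what the paper proves or needs.

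The paper sidesteps this entirely by exploiting the nesting of superlevel sets: since $F_\alpha\subset F_\eta$ for $\alpha\ge\eta$, the symmetric-difference bounds give $|I_\alpha\setminus I_\eta|\le |I_\alpha\bigtriangleup F_\alpha|+|I_\eta\bigtriangleup F_\eta|+|F_\alpha\setminus F_\eta|=o_\delta(1)$. Centering only the largest interval $I_\eta$ at the origin then confines all the $I_\alpha$ to a fixed bounded region, and the layer-cake reconstruction $\int_\eta^{\norm{\scriptf}_\infty-\eta}\one_{I_\alpha}\,d\alpha$ is uniformly bounded, supported in a fixed compact set, and of uniformly bounded total variation — hence precompact in $L^p$. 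No consistency of centers is required: the intervals are allowed to wobble, as long as they stay inside $I_\eta$. The single translation is determined by $I_\eta$ alone. This is structurally simpler and avoids the missing lemma, while your route, even if it could be made rigorous, would demand more work than the problem requires.
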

\noindent
By $f(x-a)$ we mean of course the function $\reals^1\owns x\mapsto f(x-a)$.

\section{Control of distribution functions}

Consider any normalized near-extremizing triple $(f,g,h)$.
The purpose of this section is to establish
strong control over the distribution functions $|\set{x: f(x)>t}|$,
with corresponding control for $g,h$.
We will do this by proving
that the triple of symmetric rearrangements $(f^\star,g^\star,h^\star)$
is close in $L^p\times L^q\times L^r$ to some triple which is an exact extremizer.
The discussion will exploit the following characterization of extremizers \cite{beckner},\cite{brascamplieb}. 
A generalization, with a different method of proof, is established in \cite{BCCT1}.   
\begin{proposition}\label{prop:beckneretal}
For fixed $(p,q,r)$, there exist $\sigma,\tau>0$ such that any extremizing ordered triple
$(\scriptf,\scriptg,\scripth)$ is of the form
$\scriptf(x)=c_1 \exp(-\lambda (x-a_1)^2)$,
$\scriptg(x)=c_2 \exp(-\sigma \lambda (x-a_2)^2)$,
$\scripth(x)=c_3 \exp(-\tau \lambda (x-a_3)^2)$
for some $\lambda>0$, $a_j\in\reals$, and $0\ne c_j\in\complex$ satisfying $c_1c_2c_3>0$ and $a_3=a_1+a_2$.
Conversely, any such triple is an extremizer.
\end{proposition}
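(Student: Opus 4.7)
The plan is to handle the two directions of the if-and-only-if separately: the converse by direct Gaussian computation, and the direct direction by reducing to nonnegative symmetric decreasing functions via Riesz-Sobolev, then invoking a rigidity argument to force Gaussian profile, and finally matching the scale parameters via Euler-Lagrange.

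For the converse, I would substitute $(\scriptf, \scriptg, \scripth)$ into $\langle f*g, h\rangle$ and compute explicitly. The convolution of two centered Gaussians is Gaussian and centered at the sum of the centers, so $\scriptf*\scriptg$ is a multiple of a Gaussian centered at $a_1+a_2$; pairing against $\scripth$ yields a one-dimensional Gaussian integral evaluable in closed form. Comparing with $\Apqr\norm{\scriptf}_p\norm{\scriptg}_q\norm{\scripth}_r$ and using $C_s^2 = s^{1/s}/(s')^{1/s'}$ identifies the unique pair $(\sigma,\tau)$ for which equality is attained. The constraint $a_3 = a_1+a_2$ arises because any other placement strictly decreases $|\langle \scriptf*\scriptg,\scripth\rangle|$ by strict log-concavity of centered Gaussians, and $c_1c_2c_3 > 0$ is needed so that the triangle-inequality step $|\langle f*g,h\rangle| \le \langle |f|*|g|,|h|\rangle$ is tight.

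For the direct direction, suppose $(\scriptf,\scriptg,\scripth)$ is an extremizer. I would apply the chain
\[
|\langle f*g,h\rangle| \;\le\; \langle |f|*|g|,|h|\rangle \;\le\; \langle |f|^\star*|g|^\star,|h|^\star\rangle,
\]
which must be tight at both steps. Tightness of the first pins down the phases of $\scriptf,\scriptg,\scripth$ up to the single constraint $c_1c_2c_3>0$. Tightness of the second is equality in the Riesz-Sobolev inequality on $\reals^1$, which by Burchard's analysis forces $|\scriptf|,|\scriptg|,|\scripth|$ to be translates of their symmetric decreasing rearrangements with centers $a_1,a_2,a_1+a_2$ respectively. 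It remains to prove the rigidity step: a nonnegative symmetric decreasing extremizer must be Gaussian. Here I would follow one of two classical routes: Beckner's rotation/tensorization argument, which tensors the triple with itself on $\reals^2$ and applies a coordinate rotation chosen so that the resulting trilinear form factorizes in a different way, producing a functional equation whose only positive symmetric decreasing solutions are Gaussians; or a Brascamp-Lieb-style competing-symmetries/heat-flow argument, which evolves the triple under Gaussian kernels at appropriately coupled rates, preserving the normalized trilinear form, and forces an extremizer to be a fixed point of the flow, hence Gaussian. With Gaussian form established, the Euler-Lagrange equations for the trilinear form impose linear relations among the three inverse variances that determine $\sigma,\tau$ uniquely.

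The main obstacle is the rigidity step. First-order variational analysis does not suffice, since the Euler-Lagrange equation for the normalized trilinear form admits many nonnegative symmetric decreasing non-Gaussian solutions. Gaussian rigidity is a genuinely global fact tied to the Euclidean structure of $\reals^d$, which is why the analogous characterization fails for general locally compact Abelian groups (as noted in the introduction). Beckner's approach exploits that $\reals^d \times \reals^d$ admits a rotation group strictly larger than the product of the factor rotation groups, and this extra symmetry is the essential ingredient that any proof plan must import somewhere.
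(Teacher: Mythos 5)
The paper does not actually prove Proposition~\ref{prop:beckneretal}: it is stated as a known result with citations to Beckner, Brascamp--Lieb, and the remark that BCCT1 gives a generalization by a different method. The paper immediately afterwards says ``Only these qualitative facts about the extremizers are actually needed in our analysis,'' so the proposition is imported as a black box. Your proposal therefore supplies a proof the paper deliberately omits, and the appropriate comparison is to the classical literature rather than to anything in this manuscript.

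As an outline of the classical argument your sketch is essentially correct and the architecture is sound: converse by explicit Gaussian computation; reduction of the forward direction through the chain $|\langle f*g,h\rangle|\le\langle|f|*|g|,|h|\rangle\le\langle|f|^\star*|g|^\star,|h|^\star\rangle$; Burchard's equality analysis to pin down translates of symmetric decreasing profiles; and then a genuine rigidity mechanism (Beckner's rotation/tensorization, or a competing-symmetries flow) to force the Gaussian shape. You correctly flag that first-order Euler--Lagrange analysis cannot close the rigidity gap, and that some global symmetry of $\reals^d\times\reals^d$ is the essential extra input. That diagnosis is accurate and matches what the cited papers actually do.

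Two caveats are worth noting. First, the rigidity step is left as a pointer to the literature rather than an argument, so strictly speaking your proposal is a proof \emph{plan}, not a proof; since that step is the entire content of the proposition for nonnegative symmetric decreasing data, the plan does not yet discharge the claim. Second, the remark that tightness of the modulus inequality ``pins down the phases up to the single constraint $c_1c_2c_3>0$'' is a little too optimistic. Equality in $|\langle f*g,h\rangle|\le\langle|f|*|g|,|h|\rangle$ forces $\arg f(x)+\arg g(y)+\arg h(x+y)$ to be constant almost everywhere; for functions of full support this is a Cauchy functional equation in measurable classes, whose solutions are affine, not merely constant. Thus extremizers can carry modulations $e^{i\mu x}$, $e^{i\mu y}$, $e^{-i\mu z}$ (equivalently, complex translation parameters), and a complete phase analysis should account for this one extra real degree of freedom beyond the constraint on the phases of the scalars $c_j$. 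This does not change the ``up to the affine symmetry group'' conclusion that the paper actually uses, but it is a gap in the phase bookkeeping as you have written it.
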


In particular, extremizing triples are unique, up to scalar multiplication
and the action of the natural affine group; they are bounded; their nonincreasing rearrangements
are continuous and strictly decreasing.
Only these qualitative facts about the extremizers are actually needed in our analysis.

\begin{proposition} \label{prop:starprecompact}
Let $\gamma\in(0,1]$ and $\Gamma\in[1,\infty)$.
Let $(f_\nu,g_\nu,h_\nu)\in (L^p\times L^q\times L^r)(\reals^d)$ be any extremizing sequence satisfying
$\norm{f_\nu}_p= \norm{g_\nu}_q= \norm{h_\nu}_r=1$ and 
\begin{equation} \label{starnormalized} f_\nu^\star(1),g_\nu^\star(1),h_\nu^\star(1)\in[\gamma,\Gamma]. \end{equation}
Then the sequence of rearrangements $f_\nu^\star$
is precompact in $L^p$. Likewise, the sequences $g_\nu^\star,h_\nu^\star$ are precompact in $L^q,L^r$ respectively.
\end{proposition}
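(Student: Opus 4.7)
My plan is as follows. First, by the Riesz--Sobolev inequality,
\[
\langle f_\nu * g_\nu, h_\nu\rangle \le \langle f_\nu^\star * g_\nu^\star, h_\nu^\star\rangle \le \Apqr^d,
\]
so $(f_\nu^\star, g_\nu^\star, h_\nu^\star)$ is itself an extremizing sequence with the same $L^p\times L^q\times L^r$ norms and satisfying \eqref{starnormalized}. Hence I may replace $(f_\nu, g_\nu, h_\nu)$ by its rearrangement and assume throughout that each $f_\nu, g_\nu, h_\nu$ is nonnegative and symmetric nonincreasing. The pointwise bound $f_\nu(x) \le \omega_d^{-1/p}|x|^{-d/p}$, a consequence of monotonicity together with $\norm{f_\nu}_p = 1$, makes the sequence locally uniformly bounded on $\reals^d \setminus \{0\}$; combined with monotonicity in $|x|$, Helly's selection theorem applied diagonally on the shells $\{\eps \le |x| \le \eps^{-1}\}$ produces a subsequence converging pointwise a.e.\ to a symmetric nonincreasing $f \ge 0$. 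A further diagonal extraction yields pointwise a.e.\ limits $g, h$ of the other two sequences. From $f_\nu(x) \ge \gamma$ on $B_1$ we get $f \ge \gamma$ on $B_1$ (so $f \not\equiv 0$), and by Fatou $\norm f_p, \norm g_q, \norm h_r \le 1$.

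The main remaining task is to show no mass is lost, i.e.\ $\norm f_p = \norm g_q = \norm h_r = 1$. Once this is known, pointwise a.e.\ convergence together with convergence of norms forces $f_\nu \to f$ strongly in $L^p$ (and likewise for $g, h$) via the Brezis--Lieb identity, establishing precompactness. Setting $r_\nu = f_\nu - f$, $s_\nu = g_\nu - g$, $t_\nu = h_\nu - h$, Brezis--Lieb gives $\norm{r_\nu}_p^p \to \alpha := 1 - \norm f_p^p$, and analogous scalars $\beta, \zeta$ for $s_\nu, t_\nu$. Expanding
\[
\langle f_\nu * g_\nu, h_\nu\rangle = \langle f * g, h\rangle + \langle r_\nu * s_\nu, t_\nu\rangle + \text{(six mixed terms)},
\]
the three single-remainder mixed terms pair a weakly-null sequence against a fixed function lying in the appropriate dual space by Young's inequality, and so vanish in the limit. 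Granting vanishing of the three double-remainder mixed terms, the sharp Young inequality bounds the triple-remainder term by $\Apqr^d \alpha^{1/p}\beta^{1/q}\zeta^{1/r}$, and passing to the limit yields
\[
1 \le \norm f_p\norm g_q\norm h_r + \alpha^{1/p}\beta^{1/q}\zeta^{1/r}.
\]
A generalized Hölder argument shows the right side is strictly less than $1$ unless $\alpha = \beta = \zeta = 0$ or $\norm f_p = \norm g_q = \norm h_r = 0$; nontriviality of the limits excludes the latter, forcing the former.

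The principal obstacle will be the vanishing of the double-remainder terms such as $\langle r_\nu * s_\nu, h\rangle$: weak convergences $r_\nu \rightharpoonup 0$ in $L^p$ and $s_\nu \rightharpoonup 0$ in $L^q$ do not by themselves suffice, since convolution is not weakly sequentially continuous. To overcome this I would exploit the radially nonincreasing structure inherited from the parent sequences $f_\nu, g_\nu$: via the level-set decomposition of Lemma~\ref{lemma:crudelevelsetdecomp} together with the Schur-type bound of Lemma~\ref{lemma:comparablemeasures}, one obtains a uniform-in-$\nu$ $L^{r'}$ dominator for $r_\nu * s_\nu$ on compact sets together with uniformly controlled tails, after which dominated convergence against $h \in L^r$ gives $\langle r_\nu*s_\nu, h\rangle \to 0$.
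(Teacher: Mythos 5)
Your plan takes a genuinely different route from the paper's. The paper's own proof is short: it invokes Proposition~\ref{prop:normalized} (the normalization machinery built on Lemmas~\ref{lemma:comparablemeasures} and~\ref{lemma:crudelevelsetdecomp}) together with the hypothesis $f_\nu^\star(1)\in[\gamma,\Gamma]$ to pin down the rescaling parameters $\lambda_\nu$; this yields the uniform tail estimate \eqref{alphabetanegligible}, and precompactness then follows directly from monotonicity on the annuli $\set{\alpha\le|x|\le\beta}$ combined with the controlled tails. You instead first extract a pointwise a.e.\ limit via Helly's theorem and then run a concentration-compactness argument (Brezis--Lieb, an expansion of $\langle f_\nu*g_\nu,h_\nu\rangle$, the sharp constant) to exclude loss of mass, concluding strong convergence from pointwise a.e.\ convergence plus convergence of norms. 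Your route is essentially self-contained and sidesteps the normalization proposition, but it requires tracking several error terms; the paper's route is more economical because it can lean on infrastructure already built. The final ``generalized H\"older'' step is sound provided it is spelled out: since $p^{-1}+q^{-1}+r^{-1}=2>1$ one enlarges the exponents to sum to $1$ and applies three-function H\"older on the two-point space together with $\norm{x}_{a}\le\norm{x}_p$ for $a\ge p$; equality then forces each pair to concentrate at a single index, which together with $f\ge\gamma$ on $B_1$ yields $\alpha=\beta=\zeta=0$.

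The real gap, as you flag, is the vanishing of the double-remainder terms, and the mechanism you sketch for closing it does not work. Lemmas~\ref{lemma:crudelevelsetdecomp} and~\ref{lemma:comparablemeasures} concern sets and give no obvious $L^{r'}$ dominator for $r_\nu*s_\nu$. Worse, the natural pointwise dominator coming from radial monotonicity, namely $|r_\nu*s_\nu|\le (f_\nu+f)*(g_\nu+g)\lesssim |\cdot|^{-d/p}*|\cdot|^{-d/q}\approx|\cdot|^{-d/r'}$, lies only in weak $L^{r'}(\reals^d)$, not in $L^{r'}$, so dominated convergence against $h\in L^r$ is unavailable as stated. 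What does work: for each fixed $z\ne 0$, the integrand $x\mapsto r_\nu(x)s_\nu(z-x)$ tends a.e.\ to $0$ and is dominated by $4\omega_d^{-1/p-1/q}|x|^{-d/p}|z-x|^{-d/q}$, which is in $L^1(dx)$ because $p^{-1}+q^{-1}>1$; hence $(r_\nu*s_\nu)(z)\to 0$ pointwise. Young's inequality bounds $\norm{r_\nu*s_\nu}_{r'}$ uniformly, so any weak subsequential limit in $L^{r'}$ must agree with the a.e.\ limit $0$, whence $r_\nu*s_\nu\rightharpoonup 0$ in $L^{r'}$ and $\langle r_\nu*s_\nu,h\rangle\to 0$ for all $h\in L^r$. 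The remaining double-remainder terms reduce to the same statement after Fubini and evenness, e.g.\ $\langle f*s_\nu,t_\nu\rangle=\langle f,s_\nu*t_\nu\rangle$. With this repair your concentration-compactness argument closes, and it gives a valid alternative proof of the proposition.
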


\begin{proof}
Together, the assumption that $f_\nu^\star(1)$ remains bounded above and below,
and the normalization hypotheses \eqref{normalized1} and \eqref{normalized2},
imply that the associated scalars $\lambda_\nu$ 
provided by Proposition~\ref{prop:normalized} remain bounded above and below by positive constants.
Therefore for any $\eps>0$ there exist  $\alpha>0$ and $\beta<\infty$
such that for all sufficiently large indices $\nu$,
\begin{equation} \label{alphabetanegligible}
\int_{|t|\le\alpha} f_\nu^\star(t)^p\,dt + \int_{|t|\ge\beta} f_\nu^\star(t)^p\,dt < \eps.
\end{equation}
Corresponding conclusions hold for $g_\nu^\star,h_\nu^\star$ with $p$ replaced by $q,r$ respectively.

Now $2\alpha f_\nu^\star(\alpha)^p\le\norm{f_\nu^\star}_p^p=1$,
so $f_\nu^\star(\alpha)$ remains uniformly bounded.
Since $f_\nu^\star$ is also monotonic nonincreasing along rays, 
the sequence $f_\nu^\star$ is precompact in $L^p$ of the annulus
$\set{x\in\reals^d: \alpha\le |x|\le \beta}$.
Inequality \eqref{alphabetanegligible} then implies precompactness in $L^p(\reals^d)$.
The same reasoning applies to the sequences $(g_\nu^\star),(h_\nu^\star)$. 
\end{proof}

Any normalized sequence $(f_\nu,g_\nu,h_\nu)$ satisfies the hypothesis \eqref{starnormalized}, 
so Proposition~\ref{prop:starprecompact} applies to it. Denote again by 
$(f_\nu,g_\nu,h_\nu)$ such a subsequence, chosen so that the sequence of rearrangements
$(f_\nu^\star,g_\nu^\star,h_\nu^\star)$ is convergent in $L^p\times L^q\times L^r$.
Since $\langle f_\nu^\star*g_\nu^\star,h_\nu^\star\rangle \ge \langle f_\nu*g_\nu,h_\nu\rangle$,
the sequence $(f_\nu^\star,g_\nu^\star,h_\nu^\star)$ is likewise extremizing. Its 
limit $(\scriptf,\scriptg,\scripth)$ is an extremizing triple. 
By Proposition~\ref{prop:beckneretal}, each of $\scriptf,\scriptg,\scripth$ is equal to
a Gaussian $c e^{-\lambda x^2}$ with norm equal to $1$
in $L^p,L^q,L^r$ respectively. Moreover, the parameters $c,\lambda$ both lie in some compact subinterval of $(0,\infty)$,
by virtue of the two normalizations $f_\nu^\star(1)\in[\gamma,\Gamma]$ and $1=\norm{f_\nu^\star}_p$.

Given $(f,g,h)$, denote the corresponding superlevel sets by
\[F_{s}=\set{x\in\reals: f(x)>s},\ G_{s}=\set{x\in\reals: g(x)>s},\ H_{s}=\set{x\in\reals: h(x)>s},\]
along with corresponding superlevel sets $\scriptf_{s},\scriptg_{s},\scripth_{s}$ for $\scriptf,\scriptg,\scripth$
and $F^\star_{s},G^\star_{s},H^\star_{s}$ for the rearrangements $f^\star,g^\star,h\star$, respectively.  

\begin{lemma}  \label{lemma:comparesuperlevelsets}
Consider any $\delta$--normalized ordered triple
$(f,g,h)$ of nonnegative functions such that $(f^\star,g^\star,h^\star)$
is $\delta$--close to an extremizing triple $(\scriptf,\scriptg,\scripth)$
in $L^p\times L^q\times L^r$. Then
\begin{equation} \label{measuresconverge}
\big|\,|F_{s}|-|\scriptf_{s}|\,\big|=o_\delta(1) \end{equation}
uniformly for all $s$ in any fixed compact subset of $(0,\infty)$.  Similarly 
$\big|\,|G_{s}|-|\scriptg_{s}|\,\big|=o_\delta(1) $ and $\big|\,|H_{s}|-|\scripth_{s}|\,\big|=o_\delta(1) $,
with the same uniformity.  \end{lemma}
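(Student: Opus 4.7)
The plan is to reduce to a comparison of the symmetric rearrangement $f^\star$ with the Gaussian $\scriptf$, and then to convert $L^p$ closeness of these two radial nonincreasing functions into closeness of their distribution functions. First, I would invoke the defining identity $|F_s|=|F^\star_s|$ for every $s>0$, so that it suffices to estimate $\big||F^\star_s|-|\scriptf_s|\big|$. Since $f^\star$ and $\scriptf$ are both radially symmetric and nonincreasing, $F^\star_s$ and $\scriptf_s$ are concentric balls; write their radii as $R^\star(s)$ and $R(s)$. By Proposition~\ref{prop:beckneretal}, $\scriptf$ is a Gaussian, and hence $R(\cdot)$ is a continuous, strictly decreasing bijection from $(0,M)$ onto $(0,\infty)$, where $M=\max\scriptf$.

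The main step is to show that $\big|R^\star(s)-R(s)\big|=o_\delta(1)$ uniformly for $s$ in any fixed compact $K\subset(0,M)$; the desired estimate on $\big||F^\star_s|-|\scriptf_s|\big|$ then follows from the fact that $R(K)$ is a compact subinterval of $(0,\infty)$. Suppose toward contradiction that $R^\star(s)\geq R(s)+\eta'$ for some $s\in K$ and some fixed $\eta'>0$. On the annulus $A=\set{R(s)+\eta'/2\leq|x|\leq R(s)+\eta'}$ one has $f^\star(x)\geq s$, since $|x|<R^\star(s)$, while by strict monotonicity of the Gaussian $\scriptf$ and the compactness of $R(K)$, $\scriptf(x)\leq s-\omega$ throughout $A$ for some $\omega=\omega(\eta',K)>0$ uniform in $s\in K$. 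Integration yields $\norm{f^\star-\scriptf}_p^p\geq|A|\,\omega^p\geq\tau(\eta',K)>0$, contradicting $\norm{f^\star-\scriptf}_p\leq\delta$ once $\delta$ is small. The inequality $R^\star(s)\leq R(s)-\eta'$ is handled symmetrically, using an annulus just inside the ball of radius $R(s)$. The same argument applies verbatim, replacing $p$ with $q$ or $r$, to the pairs $(g^\star,\scriptg)$ and $(h^\star,\scripth)$.

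The main technical obstacle is uniformity near the endpoint $s=M$, where the previous argument degenerates because $R(s)\to 0$ and the uniform lower bound $\omega(\eta',K)$ blows down. For $s$ in a compact subset of $(M,\infty)$ the bound is direct: $|\scriptf_s|=0$ and on $F^\star_s$ one has $f^\star-\scriptf\geq s-M$, so $|F^\star_s|(s-M)^p\leq\delta^p$. For $s$ in a small neighborhood of $M$ from below, I would supplement the argument as follows: for any target accuracy $\eta>0$, pick an auxiliary $\omega_0>0$ so that $|\set{\scriptf>M-\omega_0}|<\eta/2$ by continuity of $\scriptf$ at the origin, and then estimate
$|F^\star_s|\leq|\set{\scriptf>s-\omega_0}|+|\set{|f^\star-\scriptf|>\omega_0}|\leq\eta/2+(\delta/\omega_0)^p$,
which is $<\eta$ once $\delta$ is small. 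Splitting an arbitrary compact subset of $(0,\infty)$ into these three regimes (bounded away from $M$ below, a small neighborhood of $M$, and bounded away from $M$ above) yields the claimed uniform bound for $f$, and the same argument works for $g$ and $h$.
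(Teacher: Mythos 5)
Your argument is correct, but it takes a more circuitous path than the paper's. The paper applies Chebychev directly to the level sets: $|F_s^\star\setminus\scriptf_{s-\eta}|\le\eta^{-p}\norm{f^\star-\scriptf}_p^p$ and $|\scriptf_{s+\eta}\setminus F_s^\star|\le\eta^{-p}\norm{f^\star-\scriptf}_p^p$, then combines these with the fact that $|\scriptf_{s-\eta}\setminus\scriptf_{s+\eta}|\le C\eta$ for $s$ in a compact subset of $(0,\norm{\scriptf}_\infty)$ (the Gaussian distribution function is locally Lipschitz there), and optimizes over $\eta=\eta(\delta)\to 0$. This yields the conclusion in three lines, with no passage to radii and no proof by contradiction. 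Your version introduces the radii $R^\star(s), R(s)$, reduces to their uniform closeness, and then derives a lower bound on $\norm{f^\star-\scriptf}_p$ from an annulus where the two functions are uniformly separated; this is sound, and it uses the same underlying ingredients (monotonicity, Chebychev-type estimates, Lipschitz regularity of the Gaussian away from its peak), but it is a reparametrized and somewhat longer route to the same place. Worth noting: you were careful to patch the regime $s$ near $\norm{\scriptf}_\infty$, where the Lipschitz bound on the Gaussian's distribution function degenerates. The paper's proof, as written, only addresses compact subsets of $(0,\norm{\scriptf}_\infty)$, which is all that is used downstream (the later lemmas restrict $\alpha$ to $[\eta,\norm{\scriptf}_\infty-\eta]$); your extra case analysis fills the gap between the lemma's stated range $(0,\infty)$ and what the paper actually proves. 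One small imprecision in that patch: with your choice of $\omega_0$ so that $|\{\scriptf>M-\omega_0\}|<\eta/2$, the bound $|\{\scriptf>s-\omega_0\}|<\eta/2$ does not hold for $s<M$ since $s-\omega_0<M-\omega_0$; you need to shrink the neighborhood of $M$ (say $|s-M|<\omega_0/2$) and choose $\omega_0$ so that $|\{\scriptf>M-2\omega_0\}|<\eta/2$ to make the estimate close. This is cosmetic and does not affect the soundness of the argument.
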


\begin{proof} By Chebychev's inequality,
\[ |F_{s}^\star\setminus \scriptf_{s-\eta}|\le \eta^{-p}\norm{f^\star-\scriptf}_p^p \]
and likewise
\[ |\scriptf_{s+\eta}\setminus F_{s}^\star|\le \eta^{-p}\norm{f^\star-\scriptf}_p^p. \]
For $s$ in any compact subset of $(0,\norm{\scriptf}_\infty)$,
$|\scriptf_{s-\eta}\setminus\scriptf_{s+\eta}|\le C\eta$.
Therefore by choosing a function $\delta\mapsto\eta(\delta)$ such that $\eta(\delta)\to 0$
sufficiently slowly as $\delta\to 0$, we can conclude that
$\big|\,|F_{s}^\star|-|\scriptf_{s}|\,\big|=o_\delta(1)$.  But $|F_{s}|=|F_{s}^\star|$.

The same reasoning applies to $\big|\,|G_{s}|-|\scriptg_s| \big|$ and $\big|\,|H_{s}|-|\scripth_s| \big|$.
\end{proof}

\section{Superlevel sets are nearly intervals}

In the following statement, $F_\alpha$ continues to denote the superlevel set $F_\alpha=\set{x: f(x)>\alpha}$,
and likewise $\scriptf_\alpha,\scriptg_\beta$ denote the superlevel sets of functions $\scriptf,\scriptg$.

\begin{lemma} \label{lemma:nearlyintervals}
Fix $(p,q,r)$ satisfying \eqref{H1},\eqref{H2} and functions $\Theta,R$ with the properties discussed above.
For any $\varrho>0$ there exists a function $\delta\mapsto\eps(\delta)$ 
satisfying $\eps(\delta)\to 0$ as $\delta\to 0$ with the following property.
Let the ordered triple of nonnegative functions $(f,g,h)\in (L^p\times L^q\times L^r)(\reals^1)$  be
$(1-\delta)$--nearly extremizing for Young's inequality, and be $\delta$--normalized with respect to $\Theta,R$.
Let $(\scriptf,\scriptg,\scripth)\in L^p\times L^q\times L^r$ be an exactly extremizing ordered
triple of nonnegative functions, and 
suppose that the triple of  symmetric nonincreasing rearrangements $(f^\star,g^\star,h^\star)$
is $\delta$--close to $(\scriptf,\scriptg,\scripth)$ in $L^p\times L^q\times L^r$.
Then for every $\alpha$ for which there exists $\beta$ such that 
\begin{align} \label{eq:comparablemeasureshypothesis}
 \max(|\scriptf_\alpha|,|\scriptg_\beta|)&\le (2-\varrho) \min(|\scriptf_\alpha|,|\scriptg_\beta|)
\\ \varrho\le\alpha&\le \norm{\scriptf}_\infty-\varrho
\\ \varrho\le \beta&\le \norm{\scriptg}_\infty-\varrho, \end{align}
there exists an interval $I_\alpha\subset\reals$ such that
\begin{equation}\big|I_\alpha\bigtriangleup F_{\alpha}\big|=o_\delta(1).  \end{equation}
\end{lemma}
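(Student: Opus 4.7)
The plan is to invoke the approximate inverse Riesz-Sobolev inequality of \cite{christrieszsobolev} at the level of indicator functions of superlevel sets, using the layer-cake identity to connect Young's inequality to Riesz-Sobolev on sets. Writing $f=\int_0^\infty\one_{F_\alpha}\,d\alpha$, and similarly for $g$ and $h$, yields
\[\langle f*g,h\rangle=\iiint_{(0,\infty)^3}\langle\one_{F_\alpha}*\one_{G_\beta},\one_{H_\gamma}\rangle\,d\alpha\,d\beta\,d\gamma,\]
with the analogous identity for $(f^\star,g^\star,h^\star)$. The pointwise Riesz-Sobolev inequality ensures that the integrand defect
\[D(\alpha,\beta,\gamma):=\langle\one_{F^\star_\alpha}*\one_{G^\star_\beta},\one_{H^\star_\gamma}\rangle-\langle\one_{F_\alpha}*\one_{G_\beta},\one_{H_\gamma}\rangle\]
is pointwise nonnegative. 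Combining the near-extremization hypothesis $\langle f*g,h\rangle\geq(1-\delta)\Apqr$ with the Young bound $\langle f^\star*g^\star,h^\star\rangle\leq\Apqr$ gives at once $\iiint D=o_\delta(1)$.

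For the specified $\alpha$ with companion $\beta$, I next choose $\gamma^*\in(0,\norm{\scripth}_\infty)$ so that $|\scripth_{\gamma^*}|$ lies strictly between $\max(|\scriptf_\alpha|,|\scriptg_\beta|)$ and $|\scriptf_\alpha|+|\scriptg_\beta|$, with both margins bounded below in terms of $\varrho$ alone; this is possible because $\scripth$ is a Gaussian whose distribution function is continuous and strictly decreasing, and $\gamma^*$ then lies in a compact subinterval of $(0,\norm{\scripth}_\infty)$ determined by $\varrho$. Let $U=(\alpha-\eta,\alpha+\eta)\times(\beta-\eta,\beta+\eta)\times(\gamma^*-\eta,\gamma^*+\eta)$ for a parameter $\eta=\eta(\delta)\to 0$ to be chosen. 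Lemma~\ref{lemma:comparesuperlevelsets} and the smoothness of Gaussian distribution functions imply that, for $\delta$ and $\eta$ sufficiently small, the measures $|F_{\alpha'}|,|G_{\beta'}|,|H_{\gamma'}|$ at every point of $U$ satisfy the admissibility hypotheses of the approximate inverse Riesz-Sobolev inequality with uniform margins depending only on $\varrho$. Since $\iiint_U D\leq o_\delta(1)$, averaging furnishes a point $(\alpha',\beta',\gamma')\in U$ with $D(\alpha',\beta',\gamma')\lesssim\eta^{-3}o_\delta(1)$; taking $\eta(\delta)\to 0$ slowly enough makes this bound itself $o_\delta(1)$.

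At such a point the indicator triple $(\one_{F_{\alpha'}},\one_{G_{\beta'}},\one_{H_{\gamma'}})$ nearly saturates the Riesz-Sobolev inequality, with measures comparable and confined to a compact subinterval of $(0,\infty)$. The approximate inverse Riesz-Sobolev inequality of \cite{christrieszsobolev} then produces an interval $I\subset\reals^1$ with $|F_{\alpha'}\bigtriangleup I|=o_\delta(1)$. To transfer back to level $\alpha$, take $\alpha'\leq\alpha$; the nesting $F_\alpha\subset F_{\alpha'}$ gives $|F_\alpha\bigtriangleup F_{\alpha'}|=|F_{\alpha'}|-|F_\alpha|=|\scriptf_{\alpha'}|-|\scriptf_\alpha|+o_\delta(1)=O(\eta)+o_\delta(1)=o_\delta(1)$, by Lemma~\ref{lemma:comparesuperlevelsets} and the boundedness of $|\scriptf'|$ on the relevant range. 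Hence $|F_\alpha\bigtriangleup I|=o_\delta(1)$, and $I_\alpha:=I$ is the desired interval.

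The main obstacle is the second step: one must simultaneously choose $\gamma^*$ and the scale $\eta=\eta(\delta)$ so that throughout $U$ the measures lie uniformly in the admissible regime of the approximate inverse Riesz-Sobolev inequality with an effective rate depending only on $\varrho$, while the product $\eta^{-3}$ times the integrated defect still tends to $0$. Coordinating these quantitative requirements with the rate function supplied by \cite{christrieszsobolev} and with Lemma~\ref{lemma:comparesuperlevelsets} is the technical heart of the proof.
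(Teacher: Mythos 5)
Your layer-cake decomposition, the identification of the nonnegative defect $D$, and the observation that $\iiint D = o_\delta(1)$ all match the paper's setup. But there is a genuine gap where you invoke the inverse theorem. The approximate inverse Riesz--Sobolev inequality from \cite{christrieszsobolev} (Theorem~\ref{thm:inverse} in the paper) does not apply to a single triple $(A,B,E)$: it requires near-equality for \emph{two} test sets $E,E'$, of incommensurate sizes $|E|\approx t$ and $|E'|\approx 3t$, against the \emph{same} pair $(A,B)$. The paper flags this explicitly in the introduction as a ``weakness'' of the inverse theorem that has to be designed around. Your averaging argument produces only one triple $(\alpha',\beta',\gamma')$ at which $D$ is small, and hence only one test set $H_{\gamma'}$; that is not enough to invoke Theorem~\ref{thm:inverse}.

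The paper sidesteps this by a different mechanism: using the nesting $F_\alpha\subset F_{\alpha'}$ for $\alpha\ge\alpha'$, the measure comparison of Lemma~\ref{lemma:comparesuperlevelsets}, and the continuity and strict monotonicity of the Gaussian distribution functions, it upgrades the integral bound to a \emph{pointwise} estimate \eqref{eq:ptwisenearlyextremal} holding uniformly over all of $\Omega(\eta)$. That uniformity is exactly what permits choosing $\gamma$ and $\gamma'$ with $|\scripth_\gamma|=t$ and $|\scripth_{\gamma'}|=3t$ and feeding both $H_\gamma$ and $H_{\gamma'}$ into the inverse theorem with the single pair $(F_\alpha,G_\beta)$. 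Your averaging route could likely be repaired --- for instance, Fubini to locate $(\alpha',\beta')$ near $(\alpha,\beta)$ for which the one-variable average $\int D(\alpha',\beta',\gamma)\,d\gamma$ over $\Omega(\eta)$ is small, then select two admissible values of $\gamma$ from a large subset --- but as written the step ``the approximate inverse Riesz--Sobolev inequality then produces an interval'' is unjustified, and the two-test-set requirement is not among the quantitative difficulties you identify in your final paragraph. A second, more minor point: your choice of $|\scripth_{\gamma^*}|$ ``strictly between $\max(|\scriptf_\alpha|,|\scriptg_\beta|)$ and $|\scriptf_\alpha|+|\scriptg_\beta|$'' does not match the size constraints of Theorem~\ref{thm:inverse}, which needs $t\ge\rho\mab$ and $3t\le(1-\rho)(|A|+|B|)$; once two sets are in play, the admissible window for $t$ becomes the binding constraint, and that too depends on $\varrho$.
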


\begin{proof}
Decompose
\begin{equation}
\langle f*g,h\rangle
= \iiint_{(0,\infty)^3} 
\langle \one_{F_{\alpha}}*\one_{G_{\beta}},\,\one_{H_{\gamma}} \rangle
\,d\alpha\,d\beta\,d\gamma.
\end{equation}
By the Riesz-Sobolev inequality,
\begin{equation}
\langle \one_{F_{\alpha}}*\one_{G_{\beta}},\,\one_{H_{\gamma}} \rangle
\le
\langle \one_{F^\star_{\alpha}}*\one_{G^\star_{\beta}},\,\one_{H^\star_{\gamma}} \rangle
\end{equation}
for all $\alpha,\beta,\gamma$.

The first step in the proof is the following simple fact.
\begin{lemma} \label{lemma:onlymediumeta}
\begin{equation}
\lim_{\eta\to 0}\  \limsup_{\delta\to 0}\ \sup_f\ 
\Big\|\int_{\alpha\notin[\eta,\norm{\scriptf}_\infty-\eta]} \one_{F_{\alpha}}\,d\alpha \Big\|_p = 0.
\end{equation}
Corresponding statements hold for $g,h$ with exponents $q,r$ respectively.
\end{lemma}
\noindent
The notation $\sup_f$ means that the supremumn is taken over all functions $f$ which are
first components of ordered triples $(f,g,h)$ which satisfy the hypotheses of Lemma~\ref{lemma:nearlyintervals}. 

\begin{proof}  
Firstly,
\begin{align*}
\norm{\int_0^{\eta}\one_{F_\alpha}\,d\alpha}_p = \norm{\min(f^\star,\eta)}_p
\le \norm{f^\star-\scriptf}_p + \norm{\min(\scriptf,\eta)}_p
=o_\delta(1) + o_{\eta}(1).  \end{align*}
Secondly,
\begin{align*}
\norm{\int_{\norm{\scriptf_\infty}-\eta}^{\infty} \one_{F_\alpha}\,d\alpha}_p
&\le \norm{\max(0,f^\star-(\norm{\scriptf}_\infty-\eta))}_p
\\
&\le \norm{\max(0,\scriptf-(\norm{\scriptf}_\infty-\eta))}_p + \norm{f^\star-\scriptf}_p
\\
&= o_{\eta}(1)+o_\delta(1).
\end{align*}
\end{proof}

Define
\begin{equation} \Omega =\Omega(\eta)= [\eta,\norm{\scriptf}_\infty-\eta]\times
[\eta,\norm{\scriptg}_\infty-\eta]\times [\eta,\norm{\scripth}_\infty-\eta] \subset(0,\infty)^3.  \end{equation}
A consequence of the Lemma~\ref{lemma:onlymediumeta} and Young's inequality is that for any $\eta>0$,
\begin{equation} \label{eq:mediumetaconsequence} 
\lim_{\eta\to 0}\  \limsup_{\delta\to 0}\ \sup_{(f,g,h)}\ 
\iiint_{(0,\infty)^3\setminus\Omega(\eta)} 
\langle \one_{\scriptf_\alpha}*\one_{\scriptg_\beta},\,\one_{\scripth_\gamma}\rangle
\,d\alpha\,d\beta\,d\gamma  = 0,
\end{equation}
with the supremum taken over all triples $(f,g,h)$ which satisfy the hypotheses of Lemma~\ref{lemma:nearlyintervals}
for given $\delta$.

Next, for any $\eta>0$,
\begin{equation}  
\iiint_{\Omega(\eta)} \langle \one_{F_{\alpha}}*\one_{G_{\beta}},\,\one_{H_{\gamma}} \rangle
\,d\alpha\,d\beta\,d\gamma 
\ge \big(1-o_{\delta}(1)\big) \iiint_{\Omega(\eta)} 
\langle \one_{\scriptf_\alpha}*\one_{\scriptg_\beta},\,\one_{\scripth_\gamma}\rangle
\,d\alpha\,d\beta\,d\gamma \end{equation}
by Lemma~\ref{lemma:comparesuperlevelsets}.
The superlevel sets are nested; $F_{\alpha}\subset F_{\alpha'}$ whenever $\alpha\ge\alpha'$.
From this nesting property, the comparison of measures of superlevel sets \eqref{measuresconverge},
and the fact that $\alpha\mapsto |\scriptf_\alpha|$ is a continuous, strictly decreasing function of $\alpha$
on the interval $[0,\norm{\scriptf}_\infty]$, and by the corresponding properties of $g,h$,
it follows that for any fixed $\eta$,
\begin{equation}\label{eq:ptwisenearlyextremal} \langle \one_{F_{\alpha}}*\one_{G_{\beta}},\,\one_{H_{\gamma}} \rangle
\ge (1-o_\delta(1))\langle \one_{\scriptf_\alpha}*\one_{\scriptg_\beta},\,\one_{\scripth_\gamma}\rangle
\end{equation}
uniformly for all $(\alpha,\beta,\gamma)\in\Omega(\eta)$.

The purpose of this discussion has been to reach a position from which it is possible to invoke
an inverse theorem established in \cite{christrieszsobolev}, which states the following. 
\begin{theorem} \label{thm:inverse}
Let $\rho>0$. Let $A,B\subset\reals$ be measurable sets with finite, positive measures
satisfying $\max(|A|,|B|)\le (2-\rho)\min(|A|,|B|)$.
Suppose that there exist $t\ge \rho\mab$ satisfying $3t\le (1-\rho)(|A|+|B|)$
and measurable sets $E,E'$ satisfying
$\big|\,|E|-t\,\big|\le \tau\mab$ and $\big|\,|E'|-3t\,\big|\le \tau\mab$
such that
\begin{align}
\langle \one_A*\one_B,\one_{E}\rangle &\ge \langle \one_{A^\star}*\one_{B^\star},\one_{E^\star}\rangle -\tau\mab^2
\\
\langle \one_A*\one_B,\one_{E'}\rangle &\ge \langle \one_{A^\star}*\one_{B^\star},\one_{E'^\star}\rangle -\tau\mab^2.
\end{align}
Then there exists an interval $J$ such that $|A\bigtriangleup J|<o_\tau(1) |A|$.
\end{theorem}
\noindent The factor denoted $o_\tau(1)$ does depend on $\rho$.
However, it follows that there exists a function $\tau\mapsto\rho(\tau)$ 
satisfying $\lim_{\tau\to 0}\rho(\tau)=0$ such that the 
conclusion still holds if the hypotheses are satisfied with $\rho=\rho(\delta)$. 
We will apply Theorem~\ref{thm:inverse} in this form.

To prove Lemma~\ref{lemma:nearlyintervals}, consider any $\alpha,\beta$ satisfying the 
hypotheses of the lemma.
Because $\scripth$ is a Gaussian, its superlevel sets 
$\scripth_\gamma=\set{x: \scripth(x)>\gamma}$ have measures which take on all values in
the range $(0,\infty)$ as $\gamma$ varies over $(0,\norm{\scripth}_\infty)$.
Given $t$, there exist unique $\gamma,\gamma'$ satisfying $|\scripth_\gamma|=t$ and $|\scripth_{\gamma'}|=3t$. 
Apply Theorem~\ref{thm:inverse} with $A=F_{\alpha}$, $B=G_{\beta}$, and $E=H_{\gamma}$, $E'=H_{\gamma'}$
for these parameters $\gamma,\gamma'$.

We next verify that the hypotheses of the inverse theorem are satisfied for some $\tau=o_\delta(1)$.
Assume that $(\alpha,\beta,\gamma)\in\Omega(\eta(\delta))$. 
Firstly, 
\[\big|\,|E_t|-t\,\big| =\big|\,|H_{\gamma}|-|\scripth_\gamma|\, \big|=o_\delta(1)\]
by \eqref{measuresconverge},
uniformly for all $\gamma$ in any compact subinterval of $(0,\norm{\scripth}_\infty)$
and therefore uniformly for all $t$ in any compact subinterval of $(0,\infty)$.

Secondly, continuing to define $\gamma$ by $|\scripth_\gamma|=t$,
\begin{align*}\langle \one_A*\one_B,\one_{E_t}\rangle 
= \langle \one_{F_{\alpha}}*\one_{G_{\beta}},\,\one_{H_{\gamma}} \rangle
&\ge (1-o_\delta(1))\langle \one_{\scriptf_\alpha}*\one_{\scriptg_\beta},\,\one_{\scripth_\gamma}\rangle
\\
&\ge \langle \one_{\scriptf_\alpha}*\one_{\scriptg_\beta},\,\one_{\scripth_\gamma}\rangle
-o_\delta(1) \max(|F_{\alpha}|,|G_{\beta}|)^2.
\end{align*}
The second inequality holds because $\big|\,|F_{\alpha}|-|\scriptf_\alpha|\,\big|=o_\delta(1)$. 

We are assuming that
$ \max(|\scriptf_\alpha|,|\scriptg_\beta|)\le (2-\varrho) \min(|\scriptf_\alpha|,|\scriptg_\beta|)$.
Again since 
$\big|\,|F_{\alpha}|-|\scriptf_\alpha|\,\big|=o_\delta(1)$ and 
$\big|\,|G_{\beta}|-|\scriptg_\beta|\,\big|=o_\delta(1)$, 
it follows that 
\[\max(|F_{\alpha}|,|G_{\beta}|)\ge (2-\varrho-o_\delta(1)) \min(|F_{\alpha}|,|G_{\beta}|).\]
Consequently Theorem~\ref{thm:inverse} applies, and guarantees the existence of the interval $I$ promised in the 
statement of Lemma~\ref{lemma:nearlyintervals}.
\end{proof}

Consider any $\alpha\in [\varrho, \norm{\scriptf}_\infty-\varrho]$. 
As $\beta$ varies over $(0,\norm{\scriptg}_\infty)$, $|\scriptg_\beta|$ 
takes on all values in $(0,\infty)$, so there certainly exists $\beta$ such that
$\max(|\scriptf_\alpha|,|\scriptg_\beta|)\le (2-\varrho)\min(|\scriptf_\alpha|,|\scriptg_\beta|)$.
Therefore we may apply Lemma~\ref{lemma:nearlyintervals} to conclude that $F_{\alpha}$ nearly coincides with 
some interval. 
Therefore we have proved:
\begin{lemma} \label{lemma:gotintervals}
Suppose that $\Theta,R$  satisfy (??). For any $\eta>0$
there exists $\delta>0$ with the following property.
Let $(f,g,h)\in (L^p\times L^q\times L^r)(\reals^1)$ 
be any $(1-\delta)$--nearly extremizing ordered triple of nonnegative functions which is normalized
with respect to $\Theta,R$.
Suppose that $(f^\star,g^\star,h^\star)$ is $\delta$--close in $L^p\times L^q\times L^r$ 
to an exact extremizing triple $(\scriptf,\scriptg,\scripth)$ of nonnegative functions.  Then
for each $\alpha\in [\eta,\norm{\scriptf}_\infty-\eta]$ there exists an interval $I_\alpha$ such that 
\begin{equation} |F_{\alpha} \bigtriangleup I_\alpha|<o_\delta(1)|\scriptf_\alpha|.\end{equation}
This bound holds uniformly for all $\alpha\in[\eta,\norm{\scriptf}_\infty-\eta]$.
\end{lemma}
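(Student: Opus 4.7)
The plan is to deduce Lemma~\ref{lemma:gotintervals} essentially as a corollary of Lemma~\ref{lemma:nearlyintervals}: for each admissible $\alpha$ I will produce a partner $\beta$ satisfying the ratio hypothesis~\eqref{eq:comparablemeasureshypothesis} with a value of $\varrho$ that can be fixed uniformly in $\alpha$ before sending $\delta\to 0$.

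First, I extract the structural information from Proposition~\ref{prop:beckneretal}: each of $\scriptf,\scriptg,\scripth$ is a strictly positive Gaussian on $\reals$, so each distribution function $\alpha\mapsto|\scriptf_\alpha|$, $\beta\mapsto|\scriptg_\beta|$, $\gamma\mapsto|\scripth_\gamma|$ is a continuous, strictly decreasing bijection from $(0,\norm{\cdot}_\infty)$ onto $(0,\infty)$. Because normalization forces the Gaussian parameters into a fixed compact set (by the discussion following Proposition~\ref{prop:starprecompact}), the $L^\infty$ norms $\norm{\scriptf}_\infty,\norm{\scriptg}_\infty,\norm{\scripth}_\infty$ lie in a compact subinterval of $(0,\infty)$ that does not depend on the particular near-extremizer $(f,g,h)$.

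Next, fix $\eta>0$. For each $\alpha\in[\eta,\norm{\scriptf}_\infty-\eta]$ the measure $|\scriptf_\alpha|$ lies in some compact subinterval $[a,b]\subset(0,\infty)$ depending only on $\eta$ and the extremizer family. Define $\beta(\alpha)$ as the unique solution of $|\scriptg_{\beta(\alpha)}|=|\scriptf_\alpha|$; as $\alpha$ ranges over its compact set, $\beta(\alpha)$ ranges over a compact subinterval of $(0,\norm{\scriptg}_\infty)$. I then choose $\varrho>0$ sufficiently small, depending only on $\eta$ and the extremizer family (but not on $\delta$ or on the particular $(f,g,h)$), so that $\alpha\in[\varrho,\norm{\scriptf}_\infty-\varrho]$ and $\beta(\alpha)\in[\varrho,\norm{\scriptg}_\infty-\varrho]$ simultaneously for every such $\alpha$. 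The ratio hypothesis $\max(|\scriptf_\alpha|,|\scriptg_{\beta(\alpha)}|)\le (2-\varrho)\min(|\scriptf_\alpha|,|\scriptg_{\beta(\alpha)}|)$ then holds trivially, since the two measures are equal.

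Finally, I apply Lemma~\ref{lemma:nearlyintervals} with this $\varrho$ to the pair $(\alpha,\beta(\alpha))$ to obtain an interval $I_\alpha\subset\reals$ with $|I_\alpha\bigtriangleup F_\alpha|=o_\delta(1)$. Because a single $\varrho$ is used and the parameters $\alpha$ and $\beta(\alpha)$ range over compact sets, the $o_\delta(1)$ coming out of Lemma~\ref{lemma:nearlyintervals} is uniform in $\alpha\in[\eta,\norm{\scriptf}_\infty-\eta]$. Since $|\scriptf_\alpha|\ge a>0$ on that range, the absolute bound $|I_\alpha\bigtriangleup F_\alpha|=o_\delta(1)$ is absorbed into an estimate of the form $o_\delta(1)|\scriptf_\alpha|$, which is the assertion of the lemma. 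I do not foresee a substantive obstacle; the only subtlety worth monitoring is that $\varrho$ be fixable before letting $\delta\to 0$, and this is exactly what the compactness of the normalized family of extremizers grants.
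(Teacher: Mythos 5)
Your proposal is correct and follows essentially the same route the paper takes: the paper's own (very terse) derivation of Lemma~\ref{lemma:gotintervals} immediately after Lemma~\ref{lemma:nearlyintervals} likewise observes that, since $\beta\mapsto|\scriptg_\beta|$ is a continuous bijection onto $(0,\infty)$, one can always select a partner $\beta$ making the ratio hypothesis~\eqref{eq:comparablemeasureshypothesis} hold, and then invokes the previous lemma. Your write-up simply makes explicit the uniformity bookkeeping the paper leaves implicit, namely that the normalization pins the Gaussian parameters of $(\scriptf,\scriptg,\scripth)$ in a compact set (via the discussion after Proposition~\ref{prop:starprecompact}), so that $\varrho$ can be fixed as a function of $\eta$ alone before $\delta\to0$, that $\beta(\alpha)$ stays in a compact subinterval of $(0,\norm{\scriptg}_\infty)$, and that $|\scriptf_\alpha|$ is bounded below away from zero on $[\eta,\norm{\scriptf}_\infty-\eta]$ so the absolute $o_\delta(1)$ from Lemma~\ref{lemma:nearlyintervals} converts to the relative bound $o_\delta(1)|\scriptf_\alpha|$. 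This is a faithful, slightly more detailed account of what the paper is doing.
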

\noindent
By symmetry, corresponding intervals exist for the sets $G_{\beta}$ and $H_{\gamma}$.
It would be equivalent to write $|F_{\alpha} \bigtriangleup I_\alpha|<o_\delta(1)|F_\alpha|$
in the conclusion since it has been shown that $|F_\alpha-\scriptf_\alpha|=o_\delta(1)$
in this range of parameters. 

\section{Precompactness}

In order to establish Theorem~\ref{thm:main} for nonnegative functions in dimension $d=1$,
it suffices to prove the following.
\begin{proposition} \label{prop:precompactness}
Let $(p,q,r)$ satisfy \eqref{H1},\eqref{H2}.
Let $(f_\nu,g_\nu,h_\nu)$ be a normalized extremizing sequence of ordered triples of nonnegative
functions in $(L^p\times L^q\times L^r)(\reals^1)$.
Suppose that
$(f_\nu^\star,g_\nu^\star,h_\nu^\star)$ converges in $L^p\times L^q\times L^r$.
Then there exist sequences $a_\nu,b_\nu$ of real numbers such that the sequence of ordered triples
$(f_\nu(x-a_\nu),g_\nu(x-b_\nu),h_\nu(x-a_\nu-b_\nu))$ is precompact in $L^p\times L^q\times L^r$.
\end{proposition}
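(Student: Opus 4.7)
The plan is to apply Lemma~\ref{lemma:gotintervals} to approximate the superlevel sets of $f_\nu,g_\nu,h_\nu$ by intervals, use the midpoints of reference intervals to define the translations, upgrade this level-set information to strong convergence in $L^p\times L^q\times L^r$ via the normalization hypothesis, and finally use the extremizer characterization of Proposition~\ref{prop:beckneretal} to force the translation of $h_\nu$ to match $a_\nu+b_\nu$ modulo a vanishing error. Concretely, fix reference levels $\alpha_0,\beta_0,\gamma_0$ in the interiors of $(0,\norm{\scriptf}_\infty)$, $(0,\norm{\scriptg}_\infty)$, $(0,\norm{\scripth}_\infty)$; by Lemma~\ref{lemma:gotintervals}, $F_{\alpha_0}^\nu,G_{\beta_0}^\nu,H_{\gamma_0}^\nu$ nearly coincide with intervals whose midpoints I call $a_\nu,b_\nu,c_\nu$. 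Set $\tilde f_\nu(x)=f_\nu(x-a_\nu)$, $\tilde g_\nu(x)=g_\nu(x-b_\nu)$, $\tilde h_\nu'(x)=h_\nu(x-c_\nu)$, and $\tilde h_\nu(x)=h_\nu(x-a_\nu-b_\nu)$, so that $\tilde h_\nu=\tilde h_\nu'(\cdot-e_\nu)$ with $e_\nu:=a_\nu+b_\nu-c_\nu$.

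The first main step is to show that, along a subsequence, $\tilde f_\nu\to\scriptf$ in $L^p$; the analogous assertions for $\tilde g_\nu$ and $\tilde h_\nu'$ then follow by identical reasoning. For each $\alpha\in[\eta,\norm{\scriptf}_\infty-\eta]$ Lemma~\ref{lemma:gotintervals} gives intervals $\tilde I_\alpha^\nu$ with $|\tilde F_\alpha^\nu\bigtriangleup\tilde I_\alpha^\nu|=o_\nu(1)$, and by Lemma~\ref{lemma:comparesuperlevelsets} $|\tilde I_\alpha^\nu|\to|\scriptf_\alpha|$. Approximate nesting of the superlevel sets, combined with the centering of $\tilde I_{\alpha_0}^\nu$ at $0$, confines $\tilde I_\alpha^\nu$ to a window $[-M_\alpha,M_\alpha]$ uniformly in $\nu$: for $\alpha>\alpha_0$ the interval is almost contained in the reference one, while for $\alpha<\alpha_0$ it almost contains the reference one, pinning its center to within $\frac{1}{2}(|\scriptf_\alpha|-|\scriptf_{\alpha_0}|)$ of the origin. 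Together with the normalization bounds \eqref{normalized1}--\eqref{normalized2}, which rule out $L^p$ mass at values close to $0$ or to $\infty$, this yields tightness and equi-integrability of $(\tilde f_\nu)$. A diagonal extraction over a countable dense set of levels, using Hausdorff compactness of intervals inside $[-M_\alpha,M_\alpha]$, produces limit intervals $I_\alpha$; defining $f_0(x)=\sup\{\alpha:x\in I_\alpha\}$, tightness plus uniform convexity of $L^p$ allow one to argue via weak convergence that $\tilde f_\nu\to f_0$ in $L^p$. Continuity of the symmetric rearrangement gives $f_0^\star=\scriptf$; since $f_0$ has interval superlevel sets it must be a translate of the Gaussian $\scriptf$; and since the reference interval is centered at $0$, that translate is trivial, so $f_0=\scriptf$.

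It remains to show $e_\nu\to 0$. Translation invariance of the inner product gives $\langle \tilde f_\nu*\tilde g_\nu,\tilde h_\nu\rangle=\langle f_\nu*g_\nu,h_\nu\rangle\to\Apqr$; since $\tilde f_\nu*\tilde g_\nu\to\scriptf*\scriptg$ in $L^{r'}$, we obtain $\langle \scriptf*\scriptg,\,\tilde h_\nu'(\cdot-e_\nu)\rangle\to\Apqr$. If $|e_\nu|\to\infty$ along a subsequence, then $\tilde h_\nu'(\cdot-e_\nu)\rightharpoonup 0$ weakly in $L^r$ (a standard consequence of tightness of $\tilde h_\nu'$), so the inner product tends to $0$, contradicting $\Apqr>0$. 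If instead $e_\nu\to e^*$ finite, then $\tilde h_\nu'(\cdot-e_\nu)\to\scripth(\cdot-e^*)$ in $L^r$, so $(\scriptf,\scriptg,\scripth(\cdot-e^*))$ is an extremizing triple, and Proposition~\ref{prop:beckneretal} forces $e^*=0+0=0$. Either way $e_\nu\to 0$, and therefore $\tilde h_\nu=\tilde h_\nu'(\cdot-e_\nu)\to\scripth$ in $L^r$, establishing precompactness of the translated triple.

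The main obstacle I anticipate is the second paragraph's passage from the level-by-level interval approximations of Lemma~\ref{lemma:gotintervals} to genuine $L^p$ convergence of $\tilde f_\nu$. The key technicalities are exploiting nesting to confine all intervals to a common bounded window independent of $\nu$, exploiting the normalization to handle both the small-value and large-value tails, and identifying the subsequential limit as $\scriptf$ via the uniqueness of Gaussians with a prescribed nonincreasing rearrangement. Once the three individual convergences are in hand, the final matching $e_\nu\to 0$ is a short consequence of Proposition~\ref{prop:beckneretal}.
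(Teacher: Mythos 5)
Your proof is essentially correct and follows the same outline as the paper: reduce to the interval approximations from Lemma~\ref{lemma:gotintervals}, center a reference superlevel interval, establish $L^p$-precompactness of each translated factor separately, and finally use the extremizing property to control the mismatch $e_\nu = a_\nu+b_\nu-c_\nu$. The main technical difference is in the middle step. The paper represents the (truncated, recentered) function as $\int_\eta^{\norm{\scriptf}_\infty-\eta}\one_{I_{\nu,\alpha}}\,d\alpha$, observes that it has total variation at most $2\norm{\scriptf}_\infty$, $L^\infty$ norm at most $\norm{\scriptf}_\infty$, and uniformly bounded support, and invokes Helly/BV compactness; you instead go through tightness, equi-integrability, weak compactness and Radon--Riesz in the uniformly convex space $L^p$. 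Both routes are valid; the paper's is somewhat more elementary. Your treatment of the final step is more explicit than the paper's (which asserts boundedness of $c_\nu-(a_\nu+b_\nu)$ in a single sentence); your concentration-compactness style dichotomy is a perfectly good way to make that rigorous.

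One claim in your second paragraph is false, though fortunately harmless. You assert that because $f_0$ has nested interval superlevel sets and $f_0^\star=\scriptf$, it must be a translate of the Gaussian $\scriptf$. That is not so: the centers of the superlevel intervals $I_\alpha$ may drift with $\alpha$, subject only to the nesting constraint $|c(\alpha)-c(\alpha_0)|\le\tfrac12\big||\scriptf_\alpha|-|\scriptf_{\alpha_0}|\big|$, and such a drifting-center profile has the same rearrangement but is not symmetric about any point. The identification $f_0=\scriptf$ is therefore unjustified. But the Proposition only asserts precompactness, and your final paragraph never needs this identification: simply write $\tilde f_\nu*\tilde g_\nu\to f_0*g_0$ in $L^{r'}$ rather than $\to\scriptf*\scriptg$, conclude $e_\nu$ is bounded (the weak-limit-zero argument still applies), and pass to a subsequence with $e_\nu\to e^*$, which already gives convergence of $\tilde h_\nu$. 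You need not, and cannot in general, conclude $e^*=0$; but that conclusion was never required.
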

\noindent Here $F(x-a)$ denotes the function $\reals^1\owns x\mapsto F(x-a)$.

\begin{proof} 
Let $F_{\nu,\alpha},G_{\nu,\beta},H_{\nu,\gamma}$ denote the superlevel sets of $f_\nu,g_\nu,h_\nu$, respectively.

Let $\eta>0$. Apply Lemma~\ref{lemma:gotintervals} to obtain intervals $I_{\nu,\alpha},J_{\nu,\beta},K_{\nu,\gamma}$
associated as in the conclusion of that lemma to the sets $F_{\nu,\alpha},G_{\nu,\beta},H_{\nu,\gamma}$ respectively
for all $(\alpha,\beta,\gamma)\in\Omega(\eta)$. Thus 
$\big| F_{\nu,\alpha}\bigtriangleup I_{\nu,\alpha}\big|<\eps_\nu$,
where $\eps_\nu=\eps_\nu(\eta) \to 0$ as $\nu\to\infty$, uniformly in $\alpha$ so long as $\eta$ remains fixed. 
Corresponding conclusions hold for $G_{\nu,\beta}\bigtriangleup J_{\nu,\beta}$
and for $H_{\nu,\gamma}\bigtriangleup K_{\nu,\gamma}$.

Consider first the sequence of functions $f_\nu = \int_0^\infty \one_{F_{\nu,\alpha}}\,d\alpha$.
By Lemma~\ref{lemma:onlymediumeta}, 
in order to show that the sequence $(f_\nu)$ is precompact in $L^p$ modulo translations,
it suffices to prove that for any $\eta>0$, the sequence of functions
$\int_\eta^{\norm{\scriptf}_\infty-\eta} \one_{F_{\nu,\alpha}}\,d\alpha$, indexed by $\nu$, is precompact 
modulo translations. 
Since $|F_{\nu,\alpha}\bigtriangleup I_{\nu,\alpha}|\to 0$ uniformly
for all $\alpha$ in this interval as $\nu\to\infty$, 
this is equivalent to the $L^p$ precompactness modulo translations of the sequence of functions
\begin{equation} \label{precompact!} \int_\eta^{\norm{\scriptf}_\infty-\eta} \one_{I_{\nu,\alpha}}\,d\alpha.\end{equation}

\begin{lemma} Let $\eta>0$. Suppose that the interval $I_{\nu,\eta}$ is centered at $0$ for each index $\nu$.
Then the sequence of functions \eqref{precompact!} is precompact in $L^p(\reals^1)$.  \end{lemma}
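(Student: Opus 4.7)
The plan is to combine the nested structure of the superlevel sets with Helly's selection theorem and dominated convergence. Set $J := [\eta, \norm{\scriptf}_\infty - \eta]$ and, for each $\alpha \in J$, write $I_{\nu,\alpha} = [a_\nu^-(\alpha), a_\nu^+(\alpha)]$. Lemma~\ref{lemma:comparesuperlevelsets} gives $a_\nu^+(\alpha) - a_\nu^-(\alpha) = |\scriptf_\alpha| + o_\nu(1)$ uniformly on $J$, and by construction $|F_{\nu,\alpha} \bigtriangleup I_{\nu,\alpha}| \le \eps_\nu$. Since $F_{\nu,\alpha_2} \subset F_{\nu,\alpha_1}$ for $\alpha_1 \le \alpha_2$, this nesting transfers approximately to the intervals, $|I_{\nu,\alpha_2} \setminus I_{\nu,\alpha_1}| \le 2\eps_\nu$, which forces
\[ a_\nu^-(\alpha_2) \ge a_\nu^-(\alpha_1) - 2\eps_\nu \qquad\text{and}\qquad a_\nu^+(\alpha_2) \le a_\nu^+(\alpha_1) + 2\eps_\nu. \]
Together with the centering condition $a_\nu^-(\eta) = -a_\nu^+(\eta)$ and the uniform length bound, these inequalities show that every endpoint $a_\nu^\pm(\alpha)$ lies in a fixed bounded interval $[-M, M]$.

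Next I would upgrade the approximate nesting to exact nesting via the monotone envelopes $\tilde a_\nu^-(\alpha) := \sup\set{a_\nu^-(\alpha') : \alpha' \in [\eta, \alpha]}$ and $\tilde a_\nu^+(\alpha) := \inf\set{a_\nu^+(\alpha') : \alpha' \in [\eta, \alpha]}$. The displayed near-monotonicity yields the uniform bounds $0 \le \tilde a_\nu^- - a_\nu^- \le 2\eps_\nu$ and $0 \le a_\nu^+ - \tilde a_\nu^+ \le 2\eps_\nu$, so the modified intervals $\tilde I_{\nu,\alpha} := [\tilde a_\nu^-(\alpha), \tilde a_\nu^+(\alpha)]$ are truly nested and satisfy $\tilde I_{\nu,\alpha} \subset I_{\nu,\alpha}$ with $|I_{\nu,\alpha} \bigtriangleup \tilde I_{\nu,\alpha}| \le 4\eps_\nu$ pointwise in $\alpha$. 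Hence the function $\int_J (\one_{I_{\nu,\alpha}} - \one_{\tilde I_{\nu,\alpha}})\,d\alpha$ has $L^1(\reals)$ norm $O(\eps_\nu)$ and is pointwise bounded by $|J|\,\one_{[-M,M]}$; interpolating these two bounds shows it tends to $0$ in $L^p$. It therefore suffices to prove $L^p$--precompactness of the sequence $\int_J \one_{\tilde I_{\nu,\alpha}}\,d\alpha$.

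Finally, $\tilde a_\nu^-$ is nondecreasing and $\tilde a_\nu^+$ is nonincreasing on $J$, both uniformly bounded by $M$. Helly's selection theorem lets me pass to a subsequence along which $\tilde a_\nu^\pm(\alpha) \to a^\pm(\alpha)$ pointwise at every point of continuity of each monotone limit, hence off a countable set. Consequently $\one_{\tilde I_{\nu,\alpha}}(x) \to \one_{[a^-(\alpha), a^+(\alpha)]}(x)$ for almost every $(\alpha,x) \in J \times [-M,M]$, and the integrand is pointwise dominated by $\one_{[-M,M]}(x)$. The dominated convergence theorem then yields the $L^p$ limit $\int_J \one_{[a^-(\alpha), a^+(\alpha)]}\,d\alpha$, establishing precompactness. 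The main obstacle I expect is the quantitative passage from approximate nesting of sets to near-monotonicity of endpoints in the first paragraph; the rest is soft analysis.
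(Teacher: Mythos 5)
Your proof is correct, and it shares the core ingredients with the paper's argument: the approximate nesting of the intervals inherited from $F_{\nu,\alpha_2}\subset F_{\nu,\alpha_1}$, the centering of $I_{\nu,\eta}$ to pin down a uniformly bounded support, and ultimately a Helly-type compactness principle for monotone objects. The difference is in how the last step is packaged. You track interval endpoints, pass to exact monotone envelopes $\tilde a_\nu^\pm$, apply Helly's selection theorem to them directly, and finish with dominated convergence. The paper instead takes a shortcut: it observes that the functions $\int_\eta^{\norm{\scriptf}_\infty-\eta}\one_{I_{\nu,\alpha}\cap I_{\nu,\eta}}\,d\alpha$ are uniformly of bounded variation (each $\one_{I_{\nu,\alpha}\cap I_{\nu,\eta}}$ contributes total variation at most $2$, and $\alpha$ ranges over an interval of length at most $\norm{\scriptf}_\infty$), uniformly bounded in $L^\infty$, and uniformly compactly supported, then invokes the standard BV compactness criterion to obtain $L^p$-precompactness; the contribution of $\int\one_{I_{\nu,\alpha}\setminus I_{\nu,\eta}}\,d\alpha$ is disposed of using the same bound $|I_{\nu,\alpha}\setminus I_{\nu,\eta}|\le 2\eps_\nu$ that you derived. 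Your route is longer but more self-contained; the paper's is cleaner because it delegates the monotone-envelope and selection step to the BV compactness theorem. One small wrinkle in your argument deserves mention: the step from $|I_{\nu,\alpha_2}\setminus I_{\nu,\alpha_1}|\le 2\eps_\nu$ to the endpoint inequalities $a_\nu^-(\alpha_2)\ge a_\nu^-(\alpha_1)-2\eps_\nu$ and $a_\nu^+(\alpha_2)\le a_\nu^+(\alpha_1)+2\eps_\nu$ is valid only when the two intervals overlap; this does hold for all large $\nu$ because $|I_{\nu,\alpha}|\to|\scriptf_\alpha|$ uniformly on $J$ and $|\scriptf_\alpha|$ is bounded below by a positive constant there, but that point should be stated explicitly.
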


\begin{proof}
Whenever $\alpha\ge\eta$, $F_{\nu,\alpha}\subset F_{\nu,\eta}$.  Therefore 
\begin{align*}\big|I_{\nu,\alpha}\setminus I_{\nu,\eta} \big| 
&\le |I_{\nu,\alpha}\bigtriangleup F_{\nu,\alpha}| + |I_{\nu,\eta}\bigtriangleup F_{\nu,\eta}|
+|F_{\nu,\alpha}\setminus F_{\nu,\eta}| 
\\ &\le \eps_\nu+\eps_\nu+0. \end{align*}
Thus the $L^p$ norm of the restriction of 
$\int_\eta^{\norm{\scriptf}_\infty-\eta} \one_{I_{\nu,\alpha}}\,d\alpha$ 
to the complement of $I_{\nu,\eta}$ tends to $0$ as $\nu\to\infty$.

It remains to analyze
$\int_\eta^{\norm{\scriptf}_\infty-\eta} \one_{I_{\nu,\alpha}\cap I_{\nu,\eta}}\,d\alpha$. 
As $\nu\to\infty$, $|I_{\nu,\eta}|\to |\scriptf_\eta|<\infty$, so the intervals $I_{\nu,\eta}$
remain in a bounded subset of $\reals$. 
Finally, since $I_{\nu,\alpha}\cap I_{\nu,\eta}$ is an interval for each $\alpha$,
each function \eqref{precompact!}
is of total variation $\le 2\norm{\scriptf}_\infty$, 
has $L^\infty$ norm not exceeding $\norm{\scriptf_\infty}$,  
and is supported in a bounded interval independent of $\nu$.
Therefore the sequence of functions \eqref{precompact!} is precompact in $L^p(\reals)$.
\end{proof}

The same reasoning applies to the sequences $(g_\nu)$ and $(h_\nu)$. 
Therefore under the hypotheses of 
Proposition~\ref{prop:precompactness}, after passing to a subsequence of the original sequence $(f_\nu,g_\nu,h_\nu)$,
there exist sequences $a_\nu,b_\nu,c_\nu$ of real numbers
such that the sequence of functions $(f_\nu(x-a_\nu))$ of $x\in\reals$ is convergent in $L^p$,
and likewise $(g_\nu(x-b_\nu)),(h_\nu(x-c_\nu))$ converge $L^q,L^r$ respectively.
To complete the proof of Proposition~\ref{prop:precompactness}, it remains only to observe that since
$(f_\nu,g_\nu,h_\nu)$ is normalized and extremizing, the convergence of these sequences forces
$c_\nu-(a_\nu+b_\nu)$ to remain bounded as $\nu\to\infty$.  
Passage to a further subsequence ensures that the sequence of functions $h_\nu(x-a_\nu-b_\nu)$ converges in $L^r$.
\end{proof}

\section{An approximate functional equation }\label{section:keylemma}
Any measurable function $\varphi:\reals^d\to\complex$ which satisfies
$\varphi(x)+\varphi(y)\equiv \varphi(x+y)$ is linear.
Here we establish an approximate version of this fact, which will be the key to extending
the result proved thus far to higher dimensions. 
The lemmas of this section are likely to have other applications.

For any $R>0$, denote by $B_R$ the ball $B_R=\set{x\in\reals^d: |x|<R}$.
\begin{proposition}\label{prop:nearlylinear}
For each dimension $d\ge 1$ 
there exist a constant $C(d)<\infty$ and a function $\delta\to\eps(\delta)$ satisfying $\eps(\delta)\to 0$
with the following property.
Let $B=B_R$ be any ball in $\reals^d$ with positive radius, and let $\tau>0$.
Let $f:B_{2R}\to\complex$ be a measurable function. Suppose that 
\begin{equation} \big|\set{(x,y)\in B^2: |f(x)+f(y)-f(x+y)|>\tau}\big|<\delta |B|^2.  \end{equation}
Then there exists a linear function $L:\reals^d\to\complex$ such that
\begin{equation} \big|\set{x\in B: |f(x)-L(x)|>C(d)\tau}\big|<\eps(\delta) |B|.  \end{equation}
\end{proposition}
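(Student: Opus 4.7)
The plan is to produce $L$ in three steps: first, define an auxiliary function $L_0$ by an additive shift of $f$; second, show that $L_0$ satisfies the approximate Cauchy equation on most pairs and promote this to a pointwise statement on a smaller ball; third, invoke the Hyers--Ulam stability theorem for the Cauchy equation on a bounded domain.

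\emph{Step 1.} Let $E := \{(x,y) \in B^2 : |f(x) + f(y) - f(x+y)| > \tau\}$, so $|E| < \delta|B|^2$. By Fubini and Markov's inequality, a positive-measure set of $y_0 \in B$ has both horizontal and vertical slices of $E$ through $y_0$ of measure at most $C\sqrt{\delta}\,|B|$. Fix such a $y_0 \in B_{R/2}$ and set
\[
L_0(x) := f(x + y_0) - f(y_0), \qquad x \in B_R.
\]
The set where $|L_0 - f| > \tau$ is exactly the horizontal slice of $E$ through $y_0$, hence has measure $\le C\sqrt{\delta}\,|B|$.

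\emph{Step 2.} With $D(u,v) := f(u) + f(v) - f(u+v)$, the identity
\[
L_0(x_1) + L_0(x_2) - L_0(x_1 + x_2) = D(x_1 + y_0,\, x_2) - D(x_2,\, y_0)
\]
combined with the small measure of slices of $E$ gives $|L_0(x_1) + L_0(x_2) - L_0(x_1+x_2)| \le 2\tau$ outside a subset of $B_{R/2}^2$ of measure $O(\sqrt{\delta})|B|^2$. To turn this into a pointwise statement I use the cocycle identity
\[
D(x_1, x_2) = D(x_2, z) + D(x_1, x_2 + z) - D(x_1 + x_2, z),
\]
valid for every $x_1, x_2, z$. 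Applied with $L_0$ in place of $f$, and provided $\delta$ is smaller than a dimensional constant, a further Fubini argument produces, for each $(x_1, x_2) \in B_{R/4}^2$ with $x_1$, $x_2$, $x_1+x_2$ in a good subset $G \subset B_{R/4}$ of density $1 - o_\delta(1)$, a common witness $z \in B_{R/4}$ for which all three $D$-terms on the right are $O(\tau)$. This yields $|L_0(x_1) + L_0(x_2) - L_0(x_1 + x_2)| \le C_1\tau$ pointwise for every such pair.

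\emph{Step 3.} Define $\tilde L(x) := \operatorname{median}\{L_0(x+y) - L_0(y) : y \in G\}$ on a sub-ball $B_{R/8}$. By construction $\tilde L = L_0 + O(\tau)$ on a set of density $1 - o_\delta(1)$, and the pointwise approximate Cauchy property from Step 2, together with the stability of medians under small perturbations of their arguments, implies $|\tilde L(x_1) + \tilde L(x_2) - \tilde L(x_1+x_2)| \le C_2\tau$ for every $(x_1, x_2) \in B_{R/16}^2$. The classical Hyers--Ulam stability theorem for the Cauchy equation on a bounded domain then produces a linear map $L : \reals^d \to \complex$ with $|\tilde L - L| \le C(d)\tau$ on $B_{R/16}$. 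A rescaling argument, exploiting the scale-invariance of the hypothesis under $f(x) \mapsto f(\lambda x)$, $R \mapsto R/\lambda$, transfers the bound $|f - L| \le C(d)\tau$ back to a subset of the original ball $B$ of density $\ge 1 - \eps(\delta)$.

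The hardest step will be Step 3: producing the regularized surrogate $\tilde L$ that satisfies approximate Cauchy \emph{pointwise} while differing from $L_0$ only on a small set. Naive averaging over $y$ is unreliable here because $L_0$ carries no a priori $L^1$ control on the exceptional set where the approximate Cauchy equation fails; using medians (or an analogous robust statistic) is essential to insulate the construction from these outliers. Once pointwise approximate additivity for $\tilde L$ has been secured, the classical Hyers--Ulam machinery delivers the linear function $L$.
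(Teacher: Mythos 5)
Your high-level plan is sound and runs broadly parallel to the paper's: first regularize $f$ to a surrogate that satisfies the approximate Cauchy equation \emph{pointwise} on a ball, then apply a stability theorem. The paper accomplishes the regularization by singling out ``rich'' points, proving a four-point lemma ($f(x_1)-f(x_2)+f(x_3)-f(x_4)=O(\tau)$ whenever all four points are rich and $x_1-x_2+x_3-x_4=0$), and then defining $\varphi(z)$ as an \emph{average} of $f(u)+f(z-u)$ over rich $u$; this average is robust precisely because the four-point lemma shows the summand is approximately constant on the set being averaged. Your use of a median over the good set is a genuine alternative realization of the same robustness principle, and you are right that some robust statistic is essential because $f$ carries no a priori integrability on the bad set. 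Likewise, the paper carries out the stability step by hand (subtract a linear function chosen to match $\varphi$ at coordinate points, descend to a torus $\reals^d/\rho\ZZ^d$, average the approximate cocycle identity over the compact group), while you propose to cite the bounded-domain Hyers--Ulam theorem; both can be made to work.

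However, there are two genuine gaps as written. First, Step 3 is not actually proved. The assertion that ``stability of medians under small perturbations'' yields $|\tilde L(x_1)+\tilde L(x_2)-\tilde L(x_1+x_2)| \le C_2\tau$ for \emph{all} pairs elides the real difficulty: the three medians are taken over three independent witness variables $y_1,y_2,y_3$, and one must exhibit a simultaneous choice for which the corresponding representatives cohere. (The natural move is $y_3=y_1+y_2$, after which the discrepancy collapses to $D_0(y_1,y_2)-D_0(x_1+y_1,x_2+y_2)$; one then needs a further Fubini argument to find $(y_1,y_2)$ lying in the bulk for all three medians \emph{and} making both $D_0$-terms small. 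This is close in content to the paper's four-point lemma and its extension to $B_{2R}$, and it is the step your write-up explicitly defers.) Second, the closing ``rescaling argument'' does not transfer the conclusion from $B_{R/16}$ back to $B_R$: rescaling merely normalizes $R$, it cannot enlarge the sub-ball on which the conclusion has been obtained. What is needed is the paper's extension step — writing a generic $x\in B_R$ as a sum $\sum_i x_i$ with each $x_i$ in the small ball, and propagating $f\approx L$ along a chain of good pairs. Two smaller issues: medians of $\complex$-valued data should be handled componentwise, and the bounded-domain Hyers--Ulam theorem produces an additive (i.e., $\rationals$-linear) map, so measurability of $\tilde L$ must be invoked to upgrade it to a genuine $\reals$-linear $L$.
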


The same proof will show that if the hypothesis is strengthened to $f(x)+f(y)=f(x+y)$
on the complement of a set of measure $\delta|B|^2$, then the conclusion  can be strengthened to
\begin{equation} \big|\set{x\in B: f(x)\ne L(x)}\big|<\eps |B|.  \end{equation}

There is no hypothesis here that $f$ be locally integrable, let alone
satisfy some upper bound. This precludes the use of certain otherwise natural analytic techniques.  

A multiplicative analogue of Proposition~\ref{prop:nearlylinear} is equally natural and will 
be used to extend Theorem~\ref{thm:main} to complex-valued functions.
\begin{proposition}\label{prop:nearlycharacter}
For each dimension $d\ge 1$ 
there exist a constant $C(d)<\infty$ and a function $\delta\to\eps(\delta)$ satisfying $\eps(\delta)\to 0$
with the following property.
Let $B=B_R$ be any ball in $\reals^d$ with positive radius, and let $\tau>0$.
Let $f:B_{2R}\to\complex$ be a measurable function which vanishes only on a set of measure zero. Suppose that 
\begin{equation} \big|\set{(x,y)\in B^2: |f(x)f(y)f(x+y)^{-1}-1|>\tau}\big|<\delta |B|^2.  \end{equation}
Then there exists a linear function $L:\reals^d\to\complex$ such that 
\begin{align} \big|\set{x\in B: |f(x)e^{-L(x)}|>C\tau}\big| &<\eps(\delta) |B|  \text{ if $\tau\le 1$}  
\\
\big|\set{x\in B: |f(x)e^{-L(x)}|>C\tau}^C\big| &<\eps(\delta) |B|  \text{ if $\tau\ge 1$.}  \end{align}
\end{proposition}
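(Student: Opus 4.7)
The plan is to reduce this multiplicative approximate functional equation to the additive one of Proposition~\ref{prop:nearlylinear} via the complex logarithm, treating the modulus and the argument separately so as to navigate the branch ambiguity of $\log$.

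For the modulus, the hypothesis $|f(x)f(y)f(x+y)^{-1}-1|\leq\tau$ on a set $E\subset B\times B$ of measure $\geq(1-\delta)|B|^2$ yields, via the reverse triangle inequality, $\big|\,|f(x)||f(y)|/|f(x+y)|-1\,\big|\leq\tau$ on $E$. Since $f$ vanishes only on a null set, $u:=\log|f|$ is defined a.e.\ and satisfies $|u(x)+u(y)-u(x+y)|\leq C\tau$ on $E$ (for $\tau$ below a fixed threshold; larger $\tau$ is handled separately). Applying the real-valued case of Proposition~\ref{prop:nearlylinear} to $u$ produces a real linear $L_1:\reals^d\to\reals$ with $|u(x)-L_1(x)|\leq C\tau$ off a set of measure $<\eps_1(\delta)|B|$. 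Replacing $f$ by $\tilde f:=fe^{-L_1}$ preserves the multiplicative approximate Cauchy equation (since $L_1$ is exactly linear), and achieves $|\tilde f(x)|=1+O(\tau)$ outside a small set.

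For the argument, let $\phi(x)\in(-\pi,\pi]$ be the principal argument of $\tilde f(x)$. Combining $|\tilde f|\approx 1$ with the hypothesis off a small exceptional set gives $e^{i(\phi(x)+\phi(y)-\phi(x+y))}=1+O(\tau)$, whence
\[ \phi(x)+\phi(y)-\phi(x+y)=2\pi k(x,y)+O(\tau), \]
with $k(x,y)\in\{-1,0,1\}$ since $\phi$ is bounded in $(-\pi,\pi]$. Adding the analogous equations for the pairs $(x_0,x)$, $(x_0+x,y)$, $(x_0,x+y)$, and $(x,y)$ yields a telescoping identity that, together with the integrality of $k$, forces the cocycle relation
\[ k(x,y)=k(x_0,x)+k(x_0+x,y)-k(x_0,x+y) \]
on a set of measure $(1-o_\delta(1))|B|^2$. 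A Fubini/pigeonhole argument selects a good $x_0$ and then lets us construct a measurable $m:B\to\ZZ$ with $m(x)+m(y)-m(x+y)=k(x,y)$ off a small set. Setting $\psi:=\phi-2\pi m$, the function $\psi$ satisfies a genuine (not just modular) additive approximate Cauchy equation, so Proposition~\ref{prop:nearlylinear} applied to $\psi$ gives a real linear $L_2$ with $|\psi-L_2|\leq C\tau$ off a small set.

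Setting $L:=L_1+iL_2$, on the complement of a set of measure $\eps(\delta)|B|$ we have $\log|f|=\Re L+O(\tau)$ and $\arg f\equiv\Im L\pmod{2\pi}$ with error $O(\tau)$, whence $|f(x)e^{-L(x)}-1|=O(\tau)$, which is the intended conclusion for $\tau\leq 1$. For $\tau\geq 1$ the hypothesis imposes no nontrivial constraint on the phase (the ball $\{|z-1|\leq\tau\}$ then contains the origin), so only the modulus step survives, and the conclusion correspondingly degrades to the weaker second alternative. The chief obstacle is the absorption of the integer cocycle $k$ into a single $\ZZ$-valued function $m$: a naive pigeonhole across the three possible values of $k$ would lose a factor of three in the measure of the good set and hence fail to keep the exceptional set of size $o_\delta(1)|B|^2$, so one must instead exploit the cocycle relation to piece $m$ together consistently across $B$.
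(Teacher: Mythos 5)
Your plan — pass to logarithms and reduce the multiplicative version to Proposition~\ref{prop:nearlylinear}, treating $\log|f|$ and $\arg f$ separately — is a genuinely different route from what the paper indicates. The paper dispatches Proposition~\ref{prop:nearlycharacter} with the remark that one should ``repeat the proof of Proposition~\ref{prop:nearlylinear} with addition replaced by multiplication''; it never passes to logarithms, and in particular never confronts a branch ambiguity head on, because the argument stays with $f$ itself, the averaging step uses the fact that the arithmetic mean of numbers all within multiplicative factor $1+O(\tau)$ of a common value is again within $1+O(\tau)$ of that value, and the final ``torus'' step is allowed to produce a character (which is then absorbed into the linear function $L$, since $e^{L+2\pi i n\cdot/R}$ is still an exponential of a linear function). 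Your logarithm route is cleaner for the modulus, and that half of your argument is correct: the reverse triangle inequality, the bound $|\log(1+t)|\le C|t|$ for $|t|\le 1/2$, and an application of Proposition~\ref{prop:nearlylinear} to $u=\log|f|$ (with $L_1=\Re L$ since $u$ is real) are all in order, as is the observation that dividing out $e^{L_1}$ preserves the multiplicative approximate Cauchy relation.

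The gap is in the phase step, and it is exactly where you flag it. Your telescoping identity
\[ k(x,y)=k(x_0,x)+k(x_0+x,y)-k(x_0,x+y) \]
is the group 2-cocycle condition for $k$, and it is correctly derived. But it does not by itself produce a $\ZZ$-valued $m$ with $m(x)+m(y)-m(x+y)=k(x,y)$. If you try the natural candidate $m(x):=k(x_0,x)$, you get
\[ k(x,y)-\big[m(x)+m(y)-m(x+y)\big]=k(x_0+x,y)-k(x_0,y), \]
and there is nothing in the cocycle relation forcing the right side to vanish for most $(x,y)$; the same obstruction survives if you instead try $m(x):=\phi(x_0+x)-\phi(x_0)$ (the computation again leaves a residue $2\pi\big[k(x_0+x,y)-k(x_0,y)\big]+O(\tau)$). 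The sentence ``A Fubini/pigeonhole argument selects a good $x_0$ and then lets us construct a measurable $m$'' is therefore asserting the conclusion you need rather than deriving it. Establishing that an approximately multiplicative map into the unit circle is close, off a small exceptional set, to a genuine exponential of a linear function is the substantive content of the multiplicative case beyond the additive one; this is a Hyers--Ulam stability statement for approximate characters, and it needs a real argument (e.g.\ the Fourier-analytic one that reduces it to $\hat\Psi(n)\approx\hat\Psi(n)^2$ together with Plancherel, or a direct adaptation of the paper's averaging scheme in which the torus step is allowed to output a character that is then folded into $L$). As written, your proposal leaves this step open, so the proof is incomplete; the idea of passing through the cocycle is the right instinct, but the coboundary has to be actually built.
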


In the context of the proof of Proposition~\ref{prop:nearlylinear}, we will utilize the following notations and definitions.
\begin{definition}  Let $f:B_{2R}\to\complex$ be measurable. Let $\delta,\tau$ be the parameters
given in the statement of Proposition~\ref{prop:nearlylinear}. Let $\gamma,\lambda,\sigma>0$.
\begin{itemize}
\item
$a\approx_\lambda b$ will mean that $|a-b|<\lambda$.  
\item
If $T\subset S$ are subsets of some measure space,
we say that the vast majority of all points in $S$ belong to $T$ if $|T|\ge (1-o_\delta(1))|S|$.
\item
Define 
\begin{equation} \scriptf_\sigma =\set{(x,y)\in B_R^2: |f(x+y)-f(x)-f(y)|\le \sigma}.  \end{equation}
\item
Let $\gamma>0$.  A point $x\in B_R$ is $\gamma$--rich if $(x,y)\in \scriptf_\tau$ for all $y\in B_R^2$ 
in the complement of a set of measure $\gamma |B_R|$.
\end{itemize}
\end{definition}

\begin{proof}[Proof of Proposition~\ref{prop:nearlylinear}]
Every $x\in B_R$ is $C\delta^{1/2}$--rich, with the exception of a set of measure $\le C\delta^{1/2}|B_R|$.
Therefore for all $(x,y)\in B_{R/2}^2$ except a set of measure
$\le C\delta^{1/2}|B_R|^2$,  $x,y,x+y$ will all be $C\delta^{1/2}$--rich.

If $(x,y,w,z)\in B_{R/2}^4$, and if each of the ordered pairs
$(x,w),(y,z),(w,z),(x+w,y+z),(x+y,w+z)$ belongs to $\scriptf_\tau$,
then $(x,y)\in\scriptf_{5\tau}$. 
Indeed,
\begin{equation*} f(x+w)+f(y+z)\approx_{2\tau} f(x)+f(w)+f(y)+f(z) \approx_\tau f(x)+f(y)+f(w+z) \end{equation*}
while on the other hand
\begin{equation*} f(x+w)+f(y+z)\approx_\tau f(x+w+y+z)\approx_\tau f(x+y)+f(w+z). \end{equation*}

If $x,y$ and $x+y$ are all $\gamma$--rich for sufficiently small $\gamma$,
then for the vast majority of all $(w,z)\in B_{R/2}^2$,
each of the five ordered pairs listed will indeed belong to $\scriptf_{\tau}$.
Fix such a parameter $\gamma$.  Hence we say that $z\in B_R$ is rich if it is $\gamma$--rich for this value of $\gamma$.

We have proved the following lemma.
\begin{lemma} \label{lemma:rich}
If $\delta$ is sufficiently small, then $|f(x+y)-f(x)-f(y)|<5\tau$ whenever $(x,y)\in B_{R/2}^2$
and $x,y,x+y$ are all rich.  \end{lemma}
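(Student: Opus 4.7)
The plan is to combine the two observations that are made in the paragraphs immediately preceding the lemma. The first is a purely algebraic identity: if $(x,y,w,z)\in B_{R/2}^4$ and each of the five ordered pairs $(x,w)$, $(y,z)$, $(w,z)$, $(x+w,y+z)$, $(x+y,w+z)$ lies in $\scriptf_\tau$, then $(x,y)\in \scriptf_{5\tau}$; this has already been derived through the two chains of $\tau$--approximations. The second is a quantitative counting statement: given rich points $x$, $y$, $x+y$, I want to show that the set of $(w,z)\in B_{R/2}^2$ violating at least one of the five membership conditions has measure strictly smaller than $|B_{R/2}|^2$, so some valid $(w,z)$ exists.

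To carry out the counting, I would estimate the measure of the obstruction corresponding to each of the five conditions separately. Richness of $x$ bounds the measure of $\{w: (x,w)\notin \scriptf_\tau\}$ by $\gamma|B_R|$, which contributes at most $\gamma|B_R|\cdot|B_{R/2}|$ to the bad $(w,z)$ set; similarly richness of $y$ handles $(y,z)$. For $(w,z)\in \scriptf_\tau$, I would apply Fubini on the $(w,z)$ square and use that the set of $z\in B_R$ failing to be rich has small measure. For $(x+w, y+z)$ and $(x+y, w+z)$, I would exploit the measure--preserving shears $(w,z)\mapsto(x+w,y+z)$ and $(w,z)\mapsto(w,w+z)$ (taking $B_{R/2}^2$ into $B_R^2$, where $\scriptf_\tau$ lives), combined with the richness of $y$ and $x+y$ respectively. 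Summing the five contributions gives a bound $\le C\gamma|B_R|^2$, which is $<|B_{R/2}|^2$ once $\gamma$ is taken small enough.

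Once such a $(w,z)$ is produced, the algebraic identity yields $|f(x+y)-f(x)-f(y)|\le 5\tau$. The strict inequality $<5\tau$ in the lemma follows by choosing $(w,z)$ so that strict inequality holds in each of the five $\scriptf_\tau$ memberships, which is possible because the collection of such $(w,z)$ is of positive measure rather than a null set. The main obstacle, if one can call it that, is bookkeeping: one must ensure all five auxiliary points $x+w$, $y+z$, $w+z$ actually lie in the domain $B_R$ on which richness is defined, which is guaranteed by $w,z\in B_{R/2}$ together with $x,y,x+y\in B_R$, and one must choose the threshold $\gamma$ uniformly small enough that the union bound beats $|B_{R/2}|^2$. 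This threshold depends only on $d$, so it is legitimate to \emph{fix} $\gamma$ once and for all, as the text does just before the lemma statement.
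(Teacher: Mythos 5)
Your proposal is correct and follows essentially the same route as the paper: the algebraic chain of five approximations established just before the lemma reduces matters to producing a single $(w,z)\in B_{R/2}^2$ satisfying all five $\scriptf_\tau$--memberships, and a union bound using the richness of $x$, $y$, $x+y$ together with measure-preserving shears shows the bad set has measure well below $|B_{R/2}|^2$. The paper merely asserts the counting step (``for the vast majority of all $(w,z)\in B_{R/2}^2$ ... each of the five ordered pairs will indeed belong to $\scriptf_\tau$''), so you have simply made explicit what the text leaves compressed.
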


\begin{lemma}
If $\delta>0$ is sufficiently small then
there exists an absolute constant $C<\infty$ such that whenever
$x_j\in B_{R/8}$ are rich and satisfy $x_1-x_2+x_3-x_4=0$, 
\[|f(x_1)-f(x_2)+f(x_3)-f(x_4)|<C\tau.\]
\end{lemma}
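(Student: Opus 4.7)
The plan is to reduce the four-term estimate to two parallel chains of two applications of Lemma~\ref{lemma:rich} that meet at a common intermediate value, by inserting an auxiliary rich vector $z$ and exploiting the identity $x_1+x_3 = x_2+x_4 =: s$.

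First I would select $z \in B_{R/4}$ such that the four points $z$, $x_1+z$, $x_2+z$, and $s+z$ are simultaneously rich. Since the set of non-rich points in $B_R$ has measure $o_\delta(1)\,|B_R|$, each of these four conditions excludes a set of $z$'s of measure $o_\delta(1)\,|B_R|$; once $\delta$ is sufficiently small, their union has measure strictly less than $|B_{R/4}|$, so a valid $z$ exists. With $x_i \in B_{R/8}$ and $z \in B_{R/4}$, every pair to which I intend to apply Lemma~\ref{lemma:rich} lies in $B_{R/2} \times B_{R/2}$ and every sum still lies in $B_{R/2}$, so the hypotheses of that lemma are satisfied throughout.

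Applying Lemma~\ref{lemma:rich} first to $(x_1,z)$ and then to $(x_1+z,\,x_3)$, and adding the two estimates, yields
\[
f(x_1) + f(z) + f(x_3) \approx_{10\tau} f(s+z);
\]
the parallel chain through $(x_2,z)$ and $(x_2+z,\,x_4)$ gives
\[
f(x_2) + f(z) + f(x_4) \approx_{10\tau} f(s+z).
\]
Subtracting, the terms $f(z)$ and $f(s+z)$ cancel and I conclude $|f(x_1)-f(x_2)+f(x_3)-f(x_4)| < 20\tau$, so the absolute constant $C = 20$ works.

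The only genuinely non-mechanical step is the simultaneous richness selection of $z$; once that is in place, the four applications of Lemma~\ref{lemma:rich} combine by inspection. But this selection is a pure measure-count built on the bound on the non-rich set already established in the previous subsection, so no new analytic ingredient is required beyond what is in hand.
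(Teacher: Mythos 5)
Your proof is correct, and it takes a genuinely different and somewhat cleaner route than the paper's. The paper introduces \emph{four} auxiliary points $y_1,\dots,y_4\in B_{R/8}$ and runs a longer chain: it first replaces each $f(x_j)$ by $f(x_j+y_j)-f(y_j)$, then recombines $f(x_1+y_1)+f(x_3+y_3)$ and $f(x_2+y_2)+f(x_4+y_4)$ into single evaluations, passes to $f(y_1-y_2+y_3-y_4)$ using $x_1+x_3=x_2+x_4$, and finally unwinds the remaining $y$-terms. That chain uses roughly ten invocations of Lemma~\ref{lemma:rich} and yields an implicit constant on the order of $50$. You instead insert a \emph{single} auxiliary rich point $z\in B_{R/4}$ and route both pairs $(x_1,x_3)$ and $(x_2,x_4)$ to the common value $f(s+z)$ with $s=x_1+x_3=x_2+x_4$, so that $f(z)$ and $f(s+z)$ cancel upon subtraction; this needs only four applications of Lemma~\ref{lemma:rich} and gives $C=20$. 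Your measure-count for selecting $z$ is sound: each of the four richness requirements ($z$, $x_1+z$, $x_2+z$, $s+z$) excludes at most a set of measure $o_\delta(1)|B_R|$, and since $|B_{R/4}|$ is a fixed positive multiple of $|B_R|$, a valid $z$ exists once $\delta$ is small enough. The radius bookkeeping ($x_1+z, x_2+z\in B_{3R/8}$ and $s+z\in B_{R/2}$) keeps all arguments of Lemma~\ref{lemma:rich} inside $B_{R/2}$, as required. In short, your argument replaces the paper's symmetric four-point averaging device with a one-point telescoping device, which is both shorter and gives a better explicit constant, without requiring any ingredient not already present in the paper.
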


\begin{proof}
Observe that if $y_j\in B_{R/8}$ have the property that each of the quantities
$y_j$, $x_j+y_j$, $x_1+x_3+y_1+y_3$, $x_2+x_4+y_2+y_4$,  $x_1+x_3+y_1+y_3 -x_2-x_4-y_2-y_4$,
$y_1-y_2$, and $y_3-y_4$ is rich,
then the following chain of approximate equalities is justified by Lemma~\ref{lemma:rich}:
\begin{align*}
f(x_1)-f(x_2)&+f(x_3)-f(x_4)
\\
&\approx_{20\tau} \sum_{j=1}^4 (-1)^{j-1} \big(f(x_j+y_j)-f(y_j)\big)
\\
&=\big( f(x_1+y_1) + f(x_3+y_3)\big)
-\big( f(x_2+y_2) + f(x_4+y_4)\big)
+\sum_{j=1}^4 (-1)^{j-1}f(y_j)
\\
&\approx_{10\tau} f(x_1+x_3+y_1+y_3)
-f(x_2+x_4+y_2+y_4)
+\sum_{j=1}^4 (-1)^{j-1}f(y_j)
\\
&\approx_{5\tau} f(x_1+x_3+y_1+y_3 -x_2-x_4-y_2-y_4)
+\sum_{j=1}^4 (-1)^{j-1}f(y_j)
\\
&=f(y_1-y_2+y_3-y_4)+\sum_{j=1}^4 (-1)^{j-1}f(y_j)
\\
&\approx_{15\tau} 0.
\end{align*}
If $\delta$ is sufficiently small then the vast majority of all $4$-tuples $(y_1,y_2,y_3,y_4)\in B_{R/8}^4$ 
have the required properties.  Thus we have proved that if $u,v\in B_{R/8}$ are rich then 
$f(u)+f(v)$ is approximately well defined, in the sense that
if $u',v'\in B_{R/8}$ are also rich and if $u'+v'=u+v$,
then $f(u')+f(v')$ differs from $f(u)+f(v)$ by no more than $C\tau$.
\end{proof}

Now consider any $z\in B_{R/8}$. 
Because the vast majority of all elements of $B_{R/8}$ are rich, there exist rich $u,v\in B_{R/8}$
satisfying $u+v=z$. The associated sum $f(u)+f(v)$ then depends only on $z$, rather than on the choice of $u,v$,
up to an additive error which is $O(\tau)$.
For $z\in B_{R/8}$ define $\varphi(z)$ to be the average of $f(u)+f(z-u)$,
averaged with respect to Lebesgue measure over all $u\in B_{R/8}^2$ such that $u,z-u$ 
both belong to $B_{R/8}$ and are both rich.  The function $\varphi$ is measurable.
For every rich $z\in B_{R/8}$,
$f(z)\approx_{C\tau}\varphi(z)$ since $f(u)+f(v)\approx_{\tau} f(u+v)$ for an overwhelming majority of all pairs $u,v$.
Therefore $f\equiv \varphi+O(\tau)$ on $B_{R/8}$, except on a set of measure $\eps(\delta)|B|$,
where $\eps(\delta)\to 0$ as $\delta\to 0$.

\begin{lemma}
Suppose that $\delta$ is sufficiently small. Then
for almost every pair $(z_1,z_2)\in B_{R/8}^2$, 
$\varphi(z_1)+\varphi(z_2)$ depends only on $z_1+z_2$ up to an additive error which is $O(\tau)$.
\end{lemma}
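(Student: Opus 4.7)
The plan is to chain together two facts already in hand: the previous lemma, which gives $|f(x_1)-f(x_2)+f(x_3)-f(x_4)|<C\tau$ whenever $x_1,x_2,x_3,x_4\in B_{R/8}$ are rich and satisfy $x_1-x_2+x_3-x_4=0$, and the observation established just before the present lemma that $\varphi(z)\approx_{C\tau}f(z)$ for every rich $z\in B_{R/8}$.

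First I would note that the non-rich points in $B_{R/8}$ form a set of measure $o_\delta(1)|B_{R/8}|$, so by Fubini the vast majority of ordered pairs $(z_1,z_2)\in B_{R/8}^2$ have both coordinates rich; for any such pair, $\varphi(z_1)+\varphi(z_2)\approx_{2C\tau} f(z_1)+f(z_2)$. The implicit function $\Phi$ promised by the statement can then be defined, for $s$ in a full-measure subset of $B_{R/4}$, by averaging $\varphi(w_1)+\varphi(w_2)$ over rich pairs $(w_1,w_2)\in B_{R/8}^2$ with $w_1+w_2=s$.

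To establish the $O(\tau)$ control, I would consider any two representations $z_1+z_2=w_1+w_2=s$ with all four of $z_1,z_2,w_1,w_2$ rich in $B_{R/8}$. Setting $(x_1,x_2,x_3,x_4):=(z_1,w_1,z_2,w_2)$, the identity $x_1-x_2+x_3-x_4=0$ holds, so the previous lemma yields $|f(z_1)-f(w_1)+f(z_2)-f(w_2)|<C\tau$. Combining this with $\varphi\approx_{C\tau}f$ at each of the four rich points produces $|\varphi(z_1)+\varphi(z_2)-\varphi(w_1)-\varphi(w_2)|=O(\tau)$, which is exactly the content of the lemma.

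The only delicate point is the qualifier ``almost every'', which is handled by a routine Fubini argument: for the vast majority of $(z_1,z_2)\in B_{R/8}^2$ the affine fiber $\{(w_1,w_2)\in B_{R/8}^2:w_1+w_2=z_1+z_2\}$ contains a positive-measure set of pairs of rich points, since the rich set has density $1-o_\delta(1)$ in $B_{R/8}$. I do not foresee a genuine obstacle here; the lemma is essentially bookkeeping that propagates the four-term approximate additivity from $f$ to the regularized function $\varphi$.
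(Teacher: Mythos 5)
The proof is correct, but the route is genuinely different from the paper's.  The paper's argument decomposes $z_j = x_j + y_j$ with $x_j, y_j\in B_{R/8}$ all rich, applies the earlier ``rich'' lemma (Lemma~6.4) twice to collapse the sum, and concludes $\varphi(z_1)+\varphi(z_2) = f(z_1+z_2) + O(\tau)$ directly; the dependence on $z_1+z_2$ alone is then manifest.  Your version instead restricts attention to rich $z_1,z_2$, invokes $\varphi\approx_{C\tau} f$ at rich points, and then applies the four-term lemma (the one about $|f(x_1)-f(x_2)+f(x_3)-f(x_4)|<C\tau$) directly with $(x_1,x_2,x_3,x_4)=(z_1,w_1,z_2,w_2)$.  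That is a clean shortcut: you exploit the most general statement already in hand rather than re-deriving from Lemma~6.4.  Two points worth noting.  First, the paper's route pins down the value $\Phi(s)=f(s)+O(\tau)$ explicitly, while your pairwise-comparison argument only shows near-constancy on fibers, leaving $\Phi$ implicit; this is perfectly sufficient for the lemma as stated and for the subsequent averaging that extends $\varphi$ to $B_{2R}$.  Second, by restricting to rich $z_1,z_2$ you cover only a set of pairs of measure $(1-o_\delta(1))|B_{R/8}|^2$, whereas the paper's decomposition technique works even at a non-rich $z_j$ (since rich decompositions of $z_j$ still exist); but since the lemma asserts only ``almost every pair,'' this makes no difference.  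Your Fubini remark about fibers having positive-measure sets of rich representatives is the right thing to say and closes the argument.
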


\begin{proof}
Write $z_1=x_1+y_1$ and $z_2=x_2+y_2$ where $x_j,y_j\in B_{R/8}$ are all rich,
$\varphi(z_1)-f(x_1)-f(y_1)=O(\tau)$, and $\varphi(z_2)-f(x_2)-f(y_2)=O(\tau)$.
Then
\begin{align*}
\varphi(z_1)+\varphi(z_2) &= f(x_1)+f(y_1)+f(x_2)+f(y_2)+O(\tau)
\\ &=f(x_1+x_2)+f(y_1+y_2)+O(\tau)
\\ &=f(x_1+x_2+y_1+y_2)+O(\tau)
\\ &=f(z_1+z_2)+O(\tau)
\end{align*}
provided that $x_1+x_2$, $y_1+y_2$ are rich, which is the case for the
overwhelming majority of choices of $x_1,x_2$.
\end{proof}

Extend the definition of $\varphi$ to $B_{2R}$ by defining $\varphi(z)$
to be the average of $\sum_j \varphi(x_j)$,
with the average taken over all representations of $z$ as $\sum_{j=1}^{32}x_j$
with each $x_j\in B_{R/8}$, with respect to Lebesgue measure on the hyperplane
in $\reals^{32d}$ defined by $\sum_j x_j=z$.
It follows by a simpler analogue of reasoning given above that
\begin{equation} \varphi(x)+\varphi(y)=\varphi(x+y)+O(\tau) \text{ for all $(x,y)\in B_R^2$.} \end{equation}
In the same way, 
\begin{equation} \label{phinearlyadditive}
\varphi(\sum_{i=1}^N x_i)=\sum_{i=1}^N \varphi(x_i)+O(N\tau)
\end{equation}
for all $x_i\in B_{R}$, for any fixed $N$ provided that $\delta$ is sufficiently small.
In particular, $\varphi(0)=O(\tau)$ and $\varphi(-x)\equiv -\varphi(x)+O(\tau)$.

Consider now the special case $d=1$. Define $\psi = \varphi-L$ where $L:\reals^d\to\complex$
is the unique linear function such that $L(R)=\varphi(R)$.

Define $\Psi:\reals/R\integers\to\complex$ by $\Psi(x)=\psi(\tilde x)$
where $x$ denotes a coset in $\reals/R\integers$, and $\tilde x\in[0,R)$ denotes a representative of that coset.  Then 
\begin{equation} \label{Psinearlyadditive} \Psi(x)+\Psi(y)=\Psi(x+y)+O(\tau) \end{equation}
for all $x,y\in\reals/R\integers$.
Indeed, let $\tilde x,\tilde y\in[0,R)$ be representatives of the cosets $x,y$.  If $\tilde x+\tilde y<R$
then \eqref{Psinearlyadditive} is a consequence of the corresponding property of $\varphi$.
If $\tilde x+\tilde y\in[R,2R)$ then write
$\tilde x+\tilde y = \big(\tilde x+(R-\tilde x)\big) + \big( \tilde x+ \tilde y-R \big)$
and apply \eqref{phinearlyadditive} repeatedly together with the relation $\psi(R)=0$
to obtain \eqref{Psinearlyadditive}.

We claim that $\norm{\Psi}_{L^\infty(\reals/R\integers)}=O(\tau)$.
Indeed, for any $x\in\reals/R\integers$, 
average the relation $\Psi(x)=\Psi(x+y)-\Psi(y)+O(\tau)$ with respect to $y\in \reals/R\integers$.
The averages of $\Psi(x+y)$ and $\Psi(y)$ cancel, leaving $\Psi(x)=O(\tau)$.

Now this means that $\varphi(x)=L(x)+O(\tau)$ for all $x\in [0,R]$, and the same reasoning applies to
$[-R,0]$. Since $\varphi=f+O(\tau)$ for the vast majority of all points
in $B_{R}$, this concludes the proof in the one-dimensional case.

The higher-dimensional case requires only a slight elaboration of this argument.
Let $\rho=R/d$ and choose a linear mapping $L:\reals^d\to\complex$
so that $\varphi(\rho e_j)=L(\rho e_j)$ for each of the unit coordinate vectors $e_j$, $j\in\set{1,2,\cdots,d}$.
Set $\psi=\varphi-L$. Define $\Psi:(\reals^d/\rho\integers^d)\to\complex$ as above. The same reasoning
leads again to the conclusion that $\varphi=L+O(\tau)$ in 
$\set{x: |x_j|<\rho \text{ for all $j\in\set{1,2,\cdots,d}$}}$. Now apply \eqref{phinearlyadditive}
one last time to extend this relation to $B_R$.
\end{proof}

The proof of Proposition~\ref{prop:nearlycharacter} follows that of Proposition~\ref{prop:nearlylinear},
with addition replaced by multiplication wherever appropriate. The details are therefore omitted. \qed

Our application requires the following variant, which is an easy consequence of Proposition~\ref{prop:nearlylinear}.
\begin{lemma} \label{lemma:threefunctions}
Let $R\in(0,\infty)$. Let $\alpha,\beta,\gamma$ be measurable $\complex$--valued functions, and let 
$\frakA,\frakB,\frakC\in\complex$ be nonzero. Suppose that
\begin{equation}
\big|\set{(x,y)\in B_{R}^2: |\frakA\alpha(x)+\frakB\beta(y)+\frakC\gamma(x+y)|>\tau}\big|<\delta |B_{R}^2|.
\end{equation}
Then there exists an affine function $L:\reals^d\to\complex$ such that
\begin{equation}
\big|\set{x\in B_R: |\alpha(x)-L(x)| >C\tau}\big|<\eps(\delta)|B_R|,
\end{equation}
where $\eps(\delta)\to 0$ as $\delta\to 0$.
Here $C$ is a positive constant which depends only on $d,\frakA$.
\end{lemma}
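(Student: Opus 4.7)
The plan is to reduce Lemma~\ref{lemma:threefunctions} to Proposition~\ref{prop:nearlylinear} by using the three-function near-identity to eliminate $\beta$ and $\gamma$, producing an approximate Cauchy equation for $\alpha$ modulo an additive constant.

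First I would apply Fubini to choose a single ``good'' $y_0\in B_R$ at which the section estimate $|\frakA\alpha(x)+\frakB\beta(y_0)+\frakC\gamma(x+y_0)|\le\tau$ holds for all $x\in B_R$ outside a set of measure at most $O(\delta^{1/2})|B_R|$. Solving for $\gamma$ and setting $z=x+y_0$ yields
\[ \gamma(z)=-\tfrac{\frakA}{\frakC}\alpha(z-y_0)+c_1+O(\tau) \]
for the vast majority of $z\in B_R+y_0$, where $c_1:=-(\frakB/\frakC)\beta(y_0)$ is a fixed constant. Re-inserting this approximation (with $z=x+y$) into the original hypothesis, and using a further Fubini argument to control the intersection of the two exceptional sets, gives, for the vast majority of $(x,u)$ in a suitable sub-ball,
\[ \alpha(x+u)-\alpha(x)=\phi(u)+O(\tau),\qquad \phi(u):=\tfrac{\frakB}{\frakA}\beta(u+y_0)+\tfrac{\frakC}{\frakA}c_1, \]
which exhibits the difference $\alpha(x+u)-\alpha(x)$ as approximately independent of $x$.

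For the vast majority of triples $(x,u,v)$, the three relations $\alpha(x+u)\approx\alpha(x)+\phi(u)$, $\alpha(x+u+v)\approx\alpha(x+u)+\phi(v)$, and $\alpha(x+u+v)\approx\alpha(x)+\phi(u+v)$ hold simultaneously modulo $O(\tau)$. Chaining them and then integrating in $x$ yields
\[ \phi(u+v)=\phi(u)+\phi(v)+O(\tau) \]
for the vast majority of $(u,v)$ in an appropriate ball. This is precisely the hypothesis of Proposition~\ref{prop:nearlylinear} applied to $\phi$, which therefore produces a linear map $L_0:\reals^d\to\complex$ with $|\phi(u)-L_0(u)|\le C\tau$ outside an exceptional set of measure $\eps(\delta)|B|$.

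Substituting $\phi(u)=L_0(u)+O(\tau)$ back into $\alpha(x+u)=\alpha(x)+\phi(u)+O(\tau)$ and integrating in $u$ over the good set then shows that $\alpha(x)-L_0(x)$ lies within $O(\tau)$ of a single constant $c_2$ for the vast majority of $x\in B_R$; the affine function $L(x):=L_0(x)+c_2$ is the required approximant. The principal obstacle is measure-theoretic bookkeeping: each reduction introduces a new exceptional set, and one must confirm that the successive intersections still cover most of the relevant product domain. This is a routine chain of Fubini estimates once $\delta$ is taken sufficiently small at each stage. The dependence of $C$ on $\frakA$ enters through the factor $\frakA^{-1}$ used to isolate $\alpha(x+u)-\alpha(x)$ in the definition of $\phi$.
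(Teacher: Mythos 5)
Your reduction is a legitimate alternative to the paper's argument, and the core idea---pass through the ``increment function'' $\phi(u)\approx\alpha(x+u)-\alpha(x)$, verify the Cauchy relation for $\phi$, and invoke Proposition~\ref{prop:nearlylinear}---is sound. The paper instead eliminates $\beta$ and $\gamma$ by \emph{differencing} the hypothesis twice (subtracting at a common $y$, then at a common $x'$), then fixes a good translate $z$ to produce an approximate Cauchy equation directly for $\alpha^\natural(x)=\alpha(x-z)-\alpha(z)$. Your route applies the proposition to a translate of $\beta$ rather than of $\alpha$, then transfers the conclusion back; the paper applies it to $\alpha$ itself. Both are clean, and they cost about the same.

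Two points in your final paragraph need repair. First, ``integrating in $u$'' is not available here: the lemma makes no integrability assumption on $\alpha$ (and the paper explicitly warns, in the discussion of Proposition~\ref{prop:nearlylinear}, that $f$ need not even be locally integrable, which ``precludes the use of certain otherwise natural analytic techniques''). The correct way to pass from $\psi(x+u)=\psi(x)+O(\tau)$ for most $(x,u)$ to $\psi\approx\text{const}$ is a pigeonhole/comparison argument: for a typical pair $x,x'$ with $|x-x'|$ small, find a common $w$ with $\psi(w)\approx\psi(x)$ and $\psi(w)\approx\psi(x')$, then chain. Second, your argument as written only yields the conclusion on a sub-ball of $B_R$ (the Cauchy relation for $\phi$, and hence $L_0$, lives on a ball of radius a fixed fraction of $R$; the increment relation for $\alpha$ also carries shrunken domains from each Fubini step). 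The paper confronts exactly this issue and handles it by a bootstrap through $\beta$ and $\gamma$, successively enlarging the ball by a factor $\tfrac32$; your chaining approach can also do the job, since only $O(R/\rho)=O(1)$ steps of size $O(\tau)$ are needed, but that argument must actually be carried out rather than absorbed into ``routine Fubini estimates.''

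With those two repairs made explicit, the proposal is correct and is a genuine alternative to the paper's proof.
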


\begin{proof}
By dividing by $\frakA$ and by replacing $\beta$ by $\frakB\frakA^{-1}\beta$
and $\gamma$ by $\frakC \frakA^{-1}\gamma$ we may reduce to the case $\frakA=\frakB=\frakC=1$.
The hypothesis implies that
\[ \big|[\alpha(x)-\alpha(x')] + [\gamma(x+y)-\gamma(x'+y)]\big| \le 2\tau \]
for the vast majority of all $(x,x',y)\in B_{R}^3$.
Therefore
\[ \big|[\alpha(x-y)-\alpha(x'-y)] + [\gamma(x)-\gamma(x')]\big| \le 2\tau \]
for the vast majority of all $(x,x',y)\in B_{R/2}^4$.
Therefore
\[ \big|[\alpha(x-y)-\alpha(x'-y)] - [\alpha(x-y')-\alpha(x'-y')]\big| \le 4\tau \]
for the vast majority of all $(x,x',y,y')\in B_{R/2}^4$.
Substitute $y'=x'-z$ to conclude that
\[ \big|[\alpha(x-y)-\alpha(x'-y)] - [\alpha(x-x'+z)-\alpha(z)]\big| \le 4\tau \]
for the vast majority of all $(x,x',y,z)$ such that $(x,x',y,x'-z)\in B_{R/2}^4$.
Therefore it is  possible to choose $z\in B_{R/16}$ such that this inequality 
holds for the vast majority of all $(x,x',y)\in B_{R/4}^3$.
Defining $\alpha^\natural(x)=\alpha(x-z)-\alpha(z)$, we have 
\[ \big|\alpha^\natural(x-y)-\alpha^\natural(x'-y) - \alpha^\natural(x-x')\big| \le 4\tau \]
for the vast majority of all $(x,x',y)\in B_{R/4}^3$.
Via another substitution we find that
\[ \big|\alpha^\natural(x+y)-\alpha^\natural(x) - \alpha^\natural(y)\big| \le 4\tau \]
for the vast majority of all $(x,x',y)\in B_{R/8}^3$.

By Proposition~\ref{prop:nearlylinear}, there exists a linear function $L$
such that $\alpha^\natural(x)-L(x)=O(\tau)$ for all $x\in B_{R/8}$,
except for a set of measure $<\eta|B_R|$ where $\eta\to 0$ as $\delta\to 0$.
Since $|z|<R/16$, substituting $\alpha(x)=\alpha^\natural(x+z)+\alpha(z)$
gives $\alpha(x)-L_\alpha(x)=O(\tau)$ for the vast majority of all $x\in B_{R/16}$,
for a certain affine function $L_\alpha$.

The same reasoning applies to the functions $\beta,\gamma$, yielding corresponding affine functions $L_\beta,L_\gamma$
which satisfy $L_\alpha(x)+L_\beta(y)+L_\gamma(x+y)=O(\tau)$ on $B_{R/16}$,
and hence which satisfy the same bound on $B_R$, albeit with a larger implicit constant.

One issue remains: It has only been shown that $\alpha=L_\alpha+O(\tau)$ for most
points in $B_{R/16}$, rather than in $B_R$. But
since $\gamma(x+y)+L_\alpha(x)+L_\beta(y)=O(\tau)$ for the vast majority of all $(x,y)\in B_{R/16}^2$,
the same holds for $\gamma(x+y)-L_\gamma(x+y)$.
Thus $\gamma(x)-L_\gamma(x)=O(\tau)$ for the vast majority of all $x\in B_{R'}$ 
for any fixed $R'<R/8$. Take $R'=\frac32\cdot\frac{R}{16}$ for the sake of definiteness.
This reasoning can be repeated with the indices $\alpha,\beta,\gamma$ permuted arbitrarily.
Thus $\alpha(x)-L_\alpha(x)=O(\tau)$ for the vast majority of all $x$ in $B_{R''}$, where $R''=\tfrac32 R'$. 
Repeating this reasoning finitely times demonstrates the stated conclusion, provided that $\delta$ is sufficiently small.
\end{proof}

\section{Extension to higher dimensions}\label{section:extensions}

We next extend Theorem~\ref{thm:main} to $\reals^d$ for arbitrary dimensions $d$, by induction on $d$, 
still considering only nonnegative functions.  Let $B_R=\set{x\in\reals^d: |x|<R}$.

Consider any three Gaussian functions with domain $\reals^1$, denoted by
\begin{align*}
\varphi(s)&= c_p \alpha^{1/2p}\exp(-\alpha (s-a)^2),
\\
\psi(s)&= c_q \beta^{1/2q}\exp(-\beta(s-b)^2),
\\
\xi(s)&= c_r \gamma^{1/2r}\exp(-\gamma(s-c)^2),
\end{align*}
where the normalizing factors $c_p,c_q,c_r$ are chosen so that $\varphi,\psi,\xi$ have norms equal to $1$
in $L^p,L^q,L^r$ respectively, for all parameters $\alpha,\beta,\gamma,a,b,c$.

\begin{lemma} \label{lemma:oneDgaussians}
Let $(p,q,r)\in(1,\infty)^3$ satisfy $p^{-1}+q^{-1}+r^{-1}=2$.
There exists $\Gamma>0$ with the following property.
For any $\eps>0$ there exists $\eta>0$ 
such that for any $\alpha,a,\beta,b$, if $\norm{\varphi*\psi}_{r'}\ge (1-\eta)\Apqr$,
then $|\Gamma-\alpha\beta^{-1}|\le \eps$. Moreover,
if  $\langle \varphi*\psi,\xi\rangle \ge (1-\eta)\Apqr$ then 
\begin{equation}\label{eq:cab1} |c-a-b|<\eps,  \end{equation}
and $(\varphi,\psi,\xi)$ is $o_\eta(1)$--close in $L^p\times L^q\times L^r$
to an exactly extremizing ordered triple of functions.
\end{lemma}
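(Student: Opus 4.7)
The plan is to reduce Lemma~\ref{lemma:oneDgaussians} to a finite-dimensional optimization for Gaussians, making essential use of Proposition~\ref{prop:beckneretal}.

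First I would compute $\varphi*\psi$ explicitly via the standard Gaussian convolution formula: it is again a Gaussian, centered at $a+b$ with shape parameter $\alpha\beta/(\alpha+\beta)$ and an explicit normalizing factor depending only on $(\alpha,\beta,p,q)$. By translation invariance in $(a,b)$ and by the dilation $s\mapsto\lambda s$ (which preserves the $L^p$ and $L^q$ normalizations provided the Gaussian parameters rescale by a common factor), the norm $\norm{\varphi*\psi}_{r'}$ depends only on the single ratio $t=\alpha\beta^{-1}$; call it $\Phi(t)$. An explicit computation, using $p^{-1}+q^{-1}+r^{-1}=2$, yields $\Phi(t)=D(p,q,r)\,t^{A}(1+t)^{B}$ for explicit exponents $A,B$ with $A\ge 0$ and $A+B<0$. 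Direct calculus exhibits a unique interior critical point, which by Proposition~\ref{prop:beckneretal} attains the value $\Apqr$; and the explicit form shows $\Phi(t)\to 0$ as $t\to 0^+$ or $t\to\infty$. A routine compactness argument then gives the first conclusion: for any $\eps>0$ there exists $\eta>0$ such that $\Phi(t)\ge(1-\eta)\Apqr$ forces $|t-\Gamma|\le\eps$, where $\Gamma$ is that unique maximizer.

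For the second conclusion, assume $\langle\varphi*\psi,\xi\rangle\ge(1-\eta)\Apqr$. H\"older's inequality combined with $\norm{\xi}_r=1$ gives $\norm{\varphi*\psi}_{r'}\ge(1-\eta)\Apqr$, so the first part already pins $\alpha\beta^{-1}$ near $\Gamma$. To pin down the remaining parameters I would quotient out the three-parameter symmetry group of the functional (the two translations $\varphi(\cdot-s),\psi(\cdot-t),\xi(\cdot-s-t)$, and the simultaneous dilation of $\alpha,\beta,\gamma$ by a common factor), fixing the transversal $\alpha=1$, $a=b=0$. On this transversal $J(\beta,\gamma,c):=\langle\varphi*\psi,\xi\rangle$ is a smooth function of $(\beta,\gamma,c)\in(0,\infty)^2\times\reals$ with $J\le\Apqr$, attaining $\Apqr$ only at the unique point $(\sigma,\tau,0)$ dictated by Proposition~\ref{prop:beckneretal}. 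Explicit Gaussian estimates show that $J$ tends to $0$ as $(\beta,\gamma,c)$ approaches the boundary of this domain (any of $\beta,\gamma\to 0$ or $\infty$, or $|c|\to\infty$, in the last case by Gaussian decay $J=O(e^{-\kappa c^2})$ for some $\kappa>0$). Hence near-maximality of $J$ forces $(\beta,\gamma,c)$ close to $(\sigma,\tau,0)$; undoing the symmetries yields $|c-a-b|<\eps$ together with closeness of $\beta/\alpha$ and $\gamma/\alpha$ to the extremal ratios. Since Gaussians with parameters in a compact set depend continuously on those parameters in $L^p,L^q,L^r$, this translates into $o_\eta(1)$-closeness of $(\varphi,\psi,\xi)$ to the exactly extremizing triple with matching parameters.

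The routine computations aside, the main technical point I foresee is the boundary/escape behavior: verifying that $J$ cannot approach $\Apqr$ along any sequence in which $\beta$, $\gamma$, or $|c|$ diverges. The shape-ratio part is already handled by the boundary behavior of $\Phi$; the translation part reduces to the exponential decay of $\langle\varphi*\psi,\xi\rangle$ as the centers of $\varphi*\psi$ and $\xi$ separate. Both ingredients are immediate consequences of the explicit Gaussian formula for $\varphi*\psi$.
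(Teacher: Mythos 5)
The paper offers no explicit proof of this lemma; it merely remarks that the fact is ``elementary'' and left to the reader, or alternatively that it follows from the one-dimensional case of Theorem~\ref{thm:main} together with the known uniqueness of extremizers. Your proof supplies the promised elementary argument directly, via the explicit Gaussian convolution formula, and it is correct. It is also arguably preferable to the paper's ``alternative'' route: invoking the full $d=1$ case of Theorem~\ref{thm:main} is far heavier machinery than this three-variable calculus problem deserves, and the explicit computation is cleaner and entirely self-contained.

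The computations check out: with $t=\alpha/\beta$, one gets $\Phi(t)=D\,t^{A}(1+t)^{B}$ with $A=\tfrac{1}{2q'}>0$ and $A+B=\tfrac{1}{2p}-\tfrac12<0$, so $\Phi$ vanishes at both endpoints, has a unique interior critical point $\Gamma=-A/(A+B)$, and $\Phi(\Gamma)=\Apqr$ by Proposition~\ref{prop:beckneretal}. The second half, quotienting by the translation and dilation symmetries and passing to the transversal $(\alpha,a,b)=(1,0,0)$, is the right reduction. One small point you might state more carefully: the three boundary escapes ($\beta\to 0$ or $\infty$, $\gamma\to 0$ or $\infty$, $|c|\to\infty$) cannot simply be treated independently, since for instance the bound $J\le\norm{\varphi*\psi}_{1}\norm{\xi}_{\infty}$ deteriorates as $\beta\to 0$. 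The cleanest packaging is sequential: if $J(\beta_n,\gamma_n,c_n)\to\Apqr$, then H\"older and the first part force $\beta_n$ into a compact subinterval of $(0,\infty)$; with $\beta_n$ so confined, the remaining escapes $\gamma_n\to\{0,\infty\}$ or $|c_n|\to\infty$ each send $J\to 0$, so $(\beta_n,\gamma_n,c_n)$ stays in a compact set and any limit point is the unique maximizer $(\sigma,\tau,0)$. Your sketch already anticipates this, and the final passage from parameter closeness to $L^p\times L^q\times L^r$ closeness is standard continuity of the parametrization $(\lambda,a)\mapsto c_p\lambda^{1/2p}e^{-\lambda(\cdot-a)^2}$. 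Overall, a correct and well-chosen elementary argument that fills in exactly what the paper leaves unproved.
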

The proof of this elementary fact is left to the reader; alternatively, it is a consequence of
the one-dimensional case Theorem~\ref{thm:main} and the known uniqueness of extremizing
triples up to scalar multiplication and the action of the affine group. \qed

Consider $\reals^{d+1}$, with coordinates $(x,s)\in\reals^d\times\reals^1$.
Let $\delta>0$ be small.
Let $(f,g,h)\in (L^p\times L^q\times L^r)(\reals^d)$ be any $(1-\delta)$--nearly extremizing ordered
triple of nonnegative functions satisfying $\norm{f}_p=\norm{g}_q=\norm{h}_r=1$.

Define \begin{align} F(x)&=\norm{f(x,\cdot)}_{L^p(\reals^1)}
\\
f_x(s)&=f(x,s)/F(x) \text{ if } F(x)\notin \set{0,\infty} \end{align}
and $f_x(s)\equiv 0$ if $F(x)\in\set{0,\infty}$, 
noting that $F$ is finite almost everywhere.
If $F(x)\in(0,\infty)$ then the function $s\mapsto f_x(s)$ has $L^p$ norm equal to $1$. 
Moreover $\norm{F}_{L^p(\reals^d)}=\norm{f}_{L^p(\reals^{d+1})}=1$.
Likewise define $G,g_x,H,h_x$, which have corresponding properties.

We will exploit the representation
\begin{equation} \langle f*g,h\rangle 
= \iint_{\reals^{d+d}} F(x)G(y)H(x+y)\,\langle f_x*g_y,h_{x+y}\rangle\,dx\,dy.  \end{equation}
A first consequence is that
\begin{equation} \langle f*g,h\rangle_{\reals^{d+1}} \le \Apqr \langle F*G,H\rangle_{\reals^{d}}.  \end{equation}

The optimal constant for $\reals^k$ is known to equal $\Apqr^k$, 
where $\Apqr$ is the optimal constant for $\reals^1$.
Since
\begin{align*}
(1-\delta)\Apqr^{d+1}
&\le \langle f*g,h\rangle 
\\
&= \iint_{\reals^{d+d}} F(x)G(y)H(x+y) \langle f_x*g_y,h_{x+y}\rangle \,dx\,dy
\\
&\le \iint_{\reals^{d+d}} F(x)G(y)H(x+y) \Apqr\norm{f_x}_p\norm{g_y}_q\norm{h_{x+y}}_r \,dx\,dy
\\
&\le \Apqr\iint_{\reals^{d+d}} F(x)G(y)H(x+y) \,dx\,dy
\\
&= \Apqr\langle F*G,H\rangle.
\end{align*}
Thus 
\[\langle F*G,H\rangle \ge (1-\delta)\Apqr=(1-\delta)\Apqr\norm{F}_p\norm{G}_q\norm{H}_r,\]
which is to say that $(F,G,H)$ is a $(1-\delta)$--nearly extremizing triple for $\reals^{d}$.
By the induction hypothesis,
$(F,G,H)$ is $o_\delta(1)$--close to some extremizing ordered triple $(F_*,G_*,H_*)$.
By making an affine change of variables in $\reals^d$ we may reduce matters to the case where
\begin{equation*}
F_*(x)=c_p^d \alpha_0^{d/2p}e^{-\alpha_0|x|^2},\ 
G_*(x)=c_q^d \beta_0^{d/2q}e^{-\beta_0|x|^2},\ 
H_*(x)=c_r^d \gamma_0^{d/2r} e^{-\gamma_0|x|^2}
\end{equation*}
where $\alpha_0,\beta_0,\gamma_0$ are fixed constants which depend only on $p,q,r$, 
and $c_p,c_q,c_r$ are the normalizing constants introduced above.

For any $\eps>0$ there exist $\delta_0$ and $R<\infty$ such that
\[\iint_{\reals^{d+d}\setminus B_{R}^2} F(x)G(y)H(x+y)\,dx\,dy<\eps,\]
whenever $(F,G,H)$ is $\delta_0$--close to $(F_*,G_*,H_*)$ in $L^p\times L^q\times L^r$.
Therefore
\begin{align*}
\iint_{\reals^{d+d}\setminus B_{R}^2}\iint_{\reals^{1+1}} f(x,s)&g(y,t)h(x+y,s+t))\,ds\,dt\,dx\,dy
\\
&= \iint_{\reals^{d+d}\setminus B_{R}^2} F(x)G(y)H(x+y)\,\langle f_x*g_y,h_{x+y}\rangle\,dx\,dy
\\
&\le \Apqr \iint_{\reals^{d+d}\setminus B_{R}^2} F(x)G(y)H(x+y)\,dx\,dy
\\
&<\eps\Apqr.\end{align*}
Therefore it suffices to analyze the contribution of those $(x,y)\in B_R^2$
to the integral $\iint_{\reals^{d+d}} F(x)G(y)H(x+y)\,\langle f_x*g_y,h_{x+y}\rangle\,dx\,dy$.

\begin{lemma} \label{lemma:turncrank}
Let $R<\infty$. 
Suppose that $(p,q,r)$ satisfies \eqref{H1},\eqref{H2}.
Let $\delta>0$. If $(F,G,H)$ and $(F_*,G_*,H_*)$ are as in the above discussion, then
there exist measurable subsets $\Omega\subset B_{R}^2$ and $\omega\subset B_{R}$ with the following properties.
Firstly, 
\begin{equation} \iint_{\Omega} F(x)G(y)H(x+y)\,dx\,dy=o_\delta(1) \end{equation}
and
\begin{equation}|\Omega|+|\omega|=o_\delta(1).\end{equation}

Secondly, for $x\notin\omega$,
\begin{equation}\norm{f_x(\cdot)-\varphi_{x}(\cdot)}_{L^p(\reals^1)}=o_\delta(1) \end{equation}
where $\varphi_{x}(s) = c_p \alpha(x)^{1/p}\exp(-\alpha(x)(s-a(x))^2)$.
The functions $g_x,h_x$ are likewise close to corresponding Gaussians $\psi_{x},\xi_{x}$
with parameters $\beta(x),b(x),\gamma(x),c(x)$, on $B_{R}\setminus\omega$.

Thirdly, there exists $(\alpha,\beta,\gamma)\in (0,\infty)^3$ such that
the ordered triple of functions $(e^{-\alpha x^2},e^{-\beta x^2},e^{-\gamma x^2})$
is an extremizing triple for Young's inequality for $\reals^1$ with exponents $(p,q,r)$, which satisfies
\begin{equation} \label{eq:Gexponents} 
|\alpha(x)-\alpha|+|\beta(x)-\beta|+ |\gamma(x)-\gamma| = o_\delta(1) \end{equation}
for all $x\in B_{R}\setminus\omega$. 

Lastly, for all $(x,y)\in B_{R}^2\setminus\Omega$,
\begin{equation}\label{eq:cab} |c(x+y)-a(x)-b(y)| = o_\delta(1).  \end{equation} \end{lemma}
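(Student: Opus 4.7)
The plan is to exploit the chain of inequalities
\begin{align*}
(1-\delta)\Apqr^{d+1} &\le \iint F(x)G(y)H(x+y)\,\langle f_x*g_y,h_{x+y}\rangle\,dx\,dy\\
&\le \Apqr \iint F(x)G(y)H(x+y)\,dx\,dy \le \Apqr^{d+1}
\end{align*}
derived in the excerpt, combined with the pointwise one-dimensional Young deficit $D(x,y):=\Apqr-\langle f_x*g_y,h_{x+y}\rangle\ge 0$. Subtracting the two inequalities (using that $\iint FGH=\Apqr^d-o_\delta(1)$ by $L^p\times L^q\times L^r$--closeness of $(F,G,H)$ to the Gaussian extremizer) yields
\[ \iint F(x)G(y)H(x+y)\,D(x,y)\,dx\,dy = o_\delta(1). \]
Since $F_*,G_*,H_*$ are bounded below by a positive constant $c_R$ on $B_R$, closeness in $L^p,L^q,L^r$ gives $F(x)G(y)H(x+y)\ge c_R^3/8$ outside a subset $E\subset B_R^2$ of measure $o_\delta(1)$. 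Set
\[ \Omega = E \cup \set{(x,y)\in B_R^2:\ D(x,y) > \sqrt\delta}; \]
Chebychev then gives $|\Omega|=o_\delta(1)$ and $\iint_\Omega FGH\,dx\,dy=o_\delta(1)$. Define $\omega\subset B_R$ as the set of $x$ whose slice $\Omega_x:=\set{y\in B_R:(x,y)\in\Omega}$ satisfies $|\Omega_x|>|\Omega|^{1/2}$; by Chebychev $|\omega|=o_\delta(1)$, and for $x\notin\omega$ the vast majority of $y\in B_R$ are good.

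For each $(x,y)\notin\Omega$ the normalized triple $(f_x,g_y,h_{x+y})$ is $(1-o_\delta(1))$--nearly extremizing for the one-dimensional Young inequality, so by Proposition~\ref{prop:nonnegativecase} together with the characterization of 1D extremizers (Proposition~\ref{prop:beckneretal}, Lemma~\ref{lemma:oneDgaussians}) there exists an exact extremizing Gaussian triple $\varphi_{x,y},\psi_{x,y},\xi_{x,y}$ with scale parameters $\alpha(x,y),\beta(x,y),\gamma(x,y)$ (linked by $\alpha/\beta=\Gamma_1$, $\alpha/\gamma=\Gamma_2$ exactly) and centers $a(x,y),b(x,y),c(x,y)=a(x,y)+b(x,y)$, such that $f_x,g_y,h_{x+y}$ are $o_\delta(1)$--close to these Gaussians in $L^p,L^q,L^r$ respectively.

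The crux is the factoring step, which uses the rigidity fact that two $L^p$--normalized Gaussians with parameters ranging in a compact set that are $L^p$--close must have close parameters. For $x\notin\omega$ and any two good partners $y_1,y_2$, both $\varphi_{x,y_i}$ are $L^p$--close to $f_x$ and hence to each other, forcing $\alpha(x,y_1)\approx\alpha(x,y_2)$ and $a(x,y_1)\approx a(x,y_2)$ within $o_\delta(1)$. Define $\alpha(x),a(x)$ as these stable values, and by symmetry extract $\beta(y),b(y),\gamma(z),c(z)$ where $z=x+y$. The exact relation $\alpha(x,y)=\Gamma_1\beta(x,y)$ for most $(x,y)\in B_R^2\setminus\Omega$ now forces $\alpha(x)\approx\Gamma_1\beta(y)$, which can hold only if each is $o_\delta(1)$--close to a universal constant; the common values $\alpha,\beta,\gamma$ satisfy the extremizer ratio relations so that $(e^{-\alpha s^2},e^{-\beta s^2},e^{-\gamma s^2})$ is a 1D Young extremizing triple. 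Substituting the factored parameters into $c(x,y)=a(x,y)+b(x,y)$ yields $c(x+y)=a(x)+b(y)+o_\delta(1)$ for $(x,y)\notin\Omega$.

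The main obstacle is this factoring step, together with uniform control of the various $o_\delta(1)$ errors as the approximating Gaussians are compared across different slices in $B_R^2$. A secondary technical issue is the choice of the intermediate tolerance $\sqrt\delta$: it must be small enough that $(1-\sqrt\delta)$--near extremizers on $\reals^1$ fall within the scope of Proposition~\ref{prop:nonnegativecase}, yet large enough that the Chebychev estimates yielding $|\Omega|=o_\delta(1)$ and $|\omega|=o_\delta(1)$ remain useful.
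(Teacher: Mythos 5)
Your proposal is correct and follows essentially the same strategy as the paper's proof: Chebychev in the measure $F(x)G(y)H(x+y)\,dx\,dy$ to force the one-dimensional Young deficit to be $O(\sqrt\delta)$ off a small set $\Omega$, the lower bound on $F_*G_*H_*$ over $B_R$ to convert $FGH$-smallness into Lebesgue-measure smallness, slice sizes to define $\omega$, pointwise application of the already-established one-dimensional result to get Gaussian approximants, and then rigidity of the parameters to extract $\alpha,\beta,\gamma$ and the relation $c(x+y)\approx a(x)+b(y)$. Your handling of the ``factoring step'' is a close variant of the paper's: you use the rigidity of $L^p$-normalized Gaussians plus the exact ratio $\alpha(x,y)=\Gamma_1\beta(x,y)$ from Proposition~\ref{prop:beckneretal}, whereas the paper compares two points $x,x'\in B_R\setminus\omega$ through a common $y\in E^x\cap E^{x'}$ using Lemma~\ref{lemma:oneDgaussians}; both arguments are sound and essentially equivalent. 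One small imprecision worth noting: you cite Proposition~\ref{prop:nonnegativecase} for the pointwise one-dimensional conclusion, but that proposition presupposes $\delta$-normalization, so what is actually invoked is the one-dimensional case of Theorem~\ref{thm:main} (for nonnegative functions), which by that point has already been proved and absorbs the normalization via Proposition~\ref{prop:normalized}; also, the ``tension'' you flag in choosing the tolerance $\sqrt\delta$ is illusory, since $\sqrt\delta\to0$ serves both purposes simultaneously.
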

\noindent The constants implicit in the $o_\delta(1)$ notation in these conclusions
are permitted to depend on $R$.

\begin{proof}
It is given that
\begin{multline}
\iint_{\reals^{d}\times\reals^{d}}F(x)G(y)H(x+y)\langle f_{x}*g_{y},h_{x+y}\rangle\,dx\,dy \\
\ge (1-\delta) \iint_{\reals^{d}\times\reals^{d}}F(x)G(y)H(x+y) \Apqr\,dx\,dy.
\end{multline}
On the other hand, for any $(x,y)$, $\langle f_{x}*g_{y},h_{x+y}\rangle\le\Apqr$.  Therefore
\begin{equation} \label{eq:oneDvictory} \langle f_{x}*g_{y},h_{x+y}\rangle \ge (1-\delta^{1/2}) \Apqr\end{equation}
for all $(x,y)\in B_{R}^2\setminus\Omega'$, where $\Omega'\subset B_{R}^2$ is small in the sense that
\begin{equation}\label{eq:Omegaprimebound} \iint_{\Omega'} F(x)G(x)H(x+y)\,dx\,dy\le C\delta^{1/2}.  \end{equation}
Now we may apply the one-dimensional case of our main theorem to conclude that whenever
$(x,y)\in B_{R}^2\setminus\Omega'$, $f_x$ differs from some Gaussian by $o_\delta(1)$
in $L^p$ norm, and likewise $g_y,h_{x+y}$ are close to Gaussians in $L^q,L^r$ norms, respectively.

Later in the argument we will define $\Omega$ to the the union of $\Omega'$ with another set.
Provided that the latter set has measure $o_\delta(1)$, the conclusion \eqref{eq:Omegaprimebound}
remains valid by Young's inequality, since $F_*,G_*,H_*\in L^\infty$
and $(F,G,H) =(F_*,G_*,H_*) + o_\delta(1)$ in $L^p\times L^q\times L^r$.

Again because $(F,G,H) =(F_*,G_*,H_*) + o_\delta(1)$ and because
$F_*(x)G_*(y)H_*(x+y)$ is a strictly positive continuous function, \eqref{eq:Omegaprimebound}
implies that $\Omega'$ is small in the alternative sense that $|\Omega'|=o_\delta(1)$. 
For each $x\in B_{R}$ define
\begin{align}E^x&=\set{y\in B_{R}: (x,y)\notin \Omega'}
\\ \omega&=\set{x\in B_{R}: |E^x| < (1-\delta')|B_{R}|},  \end{align}
where $\delta'$ is a function of $\delta$.
If $\delta'$ is chosen to be a function of $\delta$ which tends to zero 
sufficiently slowly as $\delta\to 0$, then 
\[|\omega| \le o_\delta(1)|B_{R}|,\] 
and uniformly for any two points $x,x'\in B_{R}\setminus\omega$, 
\[|E^x\cap E^{x'}|\ge (1-o_\delta(1))|B_{R}|.\]  
In particular, $E^x\cap E^{x'}$ is nonempty, provided that $\delta$ is sufficiently small.

We have already concluded that whenever $(x,y)\in B_{R}^2\setminus\Omega'$, 
both $f_x,g_y$ are nearly equal to Gaussians. Therefore
for each $x\in B_{R}\setminus\omega$ it is possible to decompose 
$f_x=\varphi_x + \rho_x$ in such a manner that $\norm{\rho_x}_p=o_\delta(1)$,
and $\varphi_x(s)= c_p \alpha(x)^{1/2p}\exp(-\alpha(x) (s-a(x))^2)$. This can be done in a measurable way. 
If $x\in B_{R}\setminus\omega$ and $y\in E^x$ then there is a corresponding decomposition of $g_y$.

Consider any two points $x,x'\in B_{R}\setminus\omega$. There exists $y\in E^x\cap E^{x'}$.
By applying Lemma~\ref{lemma:oneDgaussians} to both pairs $(\varphi_x,\psi_y)$
and $(\varphi_{x'},\psi_y)$, we conclude that
\begin{equation} \left|1-\frac{\alpha(x)}{\alpha(x')}\right|<\eps 
\text{ for all $x,x'\in B_{R}\setminus\omega$.} \end{equation}
Since $\alpha(x)$ is close to $\alpha\ne 0$, and since the same reasoning 
applies to the coefficient functions $\beta,\gamma$, this gives \eqref{eq:Gexponents}.

The final conclusion is that $c(x+y)= a(x)+b(y)+o_\delta(1)$ for the vast majority of all $(x,y)\in B_R^2$.
By the same reasoning as above, we may assume
that for all $z\in B_{2R}$ except for a set $\tilde\omega$ of measure $o_\delta(1)$, 
$h_z=\xi_z + \tau_z$ where $\norm{\tau_z}_r<=o_\delta(1)$ and $\xi_z$ is a Gaussian
$\xi_z(s)= c_r \gamma^{1/2r}\exp(-\gamma(s-c(z))^2)$.
Whenever $(x,y)\in B_R^2$ is such that $x+y\notin\tilde\omega$, 
\[\langle f_x*g_y,h_{x+y}\rangle = \langle \varphi_x*\psi_y,\xi_{x+y}\rangle + o_\delta(1)\]
and therefore $\langle \varphi_x*\psi_y,\xi_{x+y}\rangle\ge 1-o_\delta(1)$.
Consequently by \eqref{eq:cab1}, $c(x+y)=a(x)+b(y)+o_\delta(1)$.
The set of all pairs $(x,y)$ with this property has measure $o_\delta(1)$.
Define $\Omega$ to be its union with $\Omega'$ to conclude the proof.
\end{proof}

By Lemma~\ref{lemma:threefunctions}, there exist an affine function $L$
and a subset $\omega^\dagger$ of $B_R=\set{x\in\reals^d: |x|\le R}$ 
such that $|\omega^\dagger|= o_\delta(1)$ and
\begin{equation} |a(x)-L(x)|=o_\delta(1) \text{ for all $x\in B_R\setminus \omega^\dagger$}.
\end{equation}
Here $C<\infty$ depends only on the dimension $d$.
$L$ may be taken to be real-valued, since $a$ is real.

Since $|\omega^\dagger|=o_\delta(1)$,
$\int_{\omega^\dagger} F_*(x)^p\,dx=o_\delta(1)$.
Since $\norm{F-F_*}_p=o_\delta(1)$, 
\[ \int_{\omega^\dagger} F(x)^p\,dx=o_\delta(1).  \]
Since we have chosen $R$ to be a function of $\delta$ which tends to $\infty$ as $\delta\to 0$,
\[ \int_{\reals^d\setminus B_R} F(x)^p\,dx=o_\delta(1).  \]

Define
$\varphi^\dagger_{x}(s) = c_p \alpha^{1/2p}\exp(-\alpha(s-L(x))^2)$.
Then
\begin{equation} \sup_{x\in B_R\setminus\omega^\dagger}
\norm{\varphi_{x}-\varphi^\dagger_{x}}_p =o_\delta(1) \end{equation}
since $\alpha(x)-\alpha=o_\delta(1)$.
Consider the function $\scriptf(x,s) = \varphi^\dagger_x(s) F_*(x)$, which is a Gaussian 
with domain $\reals^{d+1}$.
\begin{align*}
\norm{f-\scriptf}_p^p
&\le C\int_{\reals^d} F^p(x)\norm{f_x-\varphi_x}_{L^p(\reals)}^p\,dx
\\
&\qquad +  C\int_{\reals^d} F^p(x)\norm{\varphi_x-\varphi^\dagger_x}_{L^p(\reals)}^p\,dx
\\
&\qquad + C\norm{F-F_*}_{L^p(\reals^d)}^p.
\end{align*}

We have already shown that the third term on the right is $o_\delta(1)$.
To analyze the second term, partition $\reals^d$ as 
the union of $\omega^\dagger\cup(\reals^d\setminus B_R)$ and $B_R\setminus \omega^\dagger$.
The contribution of $\omega^\dagger\cup(\reals^d\setminus B_R)$ is $o_\delta(1)$,
because $\norm{\varphi_x-\varphi^\dagger_x}_p$ is uniformly bounded
and $\int_{\omega^\dagger}F^p+ \int_{\reals^d\setminus B_R}F^p=o_\delta(1)$.
The contribution of $B_R\setminus \omega^\dagger$ is $o_\delta(1)$ since 
$a(x)-L(x)=o_\delta(1)$ uniformly for all $x\in B_R\setminus\omega^\dagger$ and $\norm{F}_p=1$.
The first term is treated in the same way, using the facts that
$\norm{f_x-\varphi_x}_{L^p(\reals^1)}=o_\delta(1)$ for all $x\notin\omega$
and $|\omega|=o_\delta(1)$.  We conclude that
\begin{equation} \norm{f-\scriptf}_p=o_\delta(1).  \end{equation}
The same reasoning applies to $g,h$. 

Thus $(f,g,h)$ is close in $L^p\times L^q\times L^r$ norm
to an ordered triple $(\scriptf,\scriptg,\scripth)$,
where $\scriptf(x,s)=F(x)c_p \alpha^{1/2p}e^{-\alpha (s-L(x))^2}$
and $\scriptg,\scripth$ have corresponding structure with exponents $q,r$ and
parameters $\beta,L',\gamma,L''$ respectively.
Here $L,L',L''$ are affine functions.
In particular, each of $\scriptf,\scriptg,\scripth$ is a Gaussian. 
Since $\langle f*g,h\rangle \ge (1-\delta)\Apqr^{d+1}$,
\begin{equation}\label{eq:soclose} \langle \scriptf*\scriptg,\scripth\rangle \ge (1-o_\delta(1))\Apqr^{d+1}
\norm{\scriptf}_p\norm{\scriptg}_q\norm{\scripth}_r.\end{equation}

We are again in the situation of Lemma~\ref{lemma:oneDgaussians}, but in arbitrary dimension.
It is still elementary that \eqref{eq:soclose} implies that the Gaussian triple $(\scriptf,\scriptg,\scripth)$
differs from some extremizing triple by $o_\delta(1)$ in $L^p\times L^q\times L^r$.
This completes the proof of Theorem~\ref{thm:main} for nonnegative functions. \qed

\medskip
Continue to assume that $(p,q,r)$ satisfies \eqref{H1},\eqref{H2}.
Consider any ordered triple of complex-valued functions $(f,g,h)\in (L^p\times L^q\times L^r)(\reals^d)$
which is a $(1-\delta)$--near extremizer for Young's inequality.
Since $|\langle f*g,h\rangle|\le \langle |f|*|g|,|h|\rangle$,
the triple $(F,G,H)=(|f|,|g|,|h|)$ is also a $(1-\delta)$--near extremizer.
Factor $f=e^{i\alpha}F$, $g=e^{i\beta }G$, $h=e^{i\gamma}H$
where $\alpha,\beta,\gamma$ are measurable real-valued functions.
Assume without loss of generality that $\langle f*g,h\rangle$ is real and positive.
Therefore
\begin{multline*}
\iint_{\reals^{d+d}} F(x)G(y)H(x+y)\Re\big(e^{i[\alpha(x)+\beta(y)+\gamma(x+y)]}\big)\,dx\,dy
\\
\ge(1-\delta)
\iint_{\reals^{d+d}} F(x)G(y)H(x+y)\,dx\,dy.
\end{multline*}
We have shown that $(F,G,H)$ differs from some exact extremizer $(\scriptf,\scriptg,\scripth)$
by $o_\delta(1)$ in $L^p\times L^q\times L^r$.
By making an affine change of variables, we may reduce matters to the case where
$(\scriptf,\scriptg,\scripth)$ is fixed.

Fix any $R<\infty$.
Since $\scriptf(x)\scriptg(y)\scripth(x+y)$ is a strictly positive, continuous function,
there exists $\delta_0>0$ such that for all $\delta<\delta_0$,
$|e^{i[\alpha(x)+\beta(y)+\gamma(x+y)]}-1|=o_\delta(1)$
on $B_R^2$, with the exception of a set whose measure is $o_\delta(1)$.
Therefore Lemma~\ref{lemma:threefunctions}
may be invoked to conclude that there exists a real-valued affine function $L$ such that
$\alpha=L+o_\delta(1)$ on $B_R$, with the exception of a set of measure $o_\delta(1)$.
Corresponding conclusions hold for the functions $\beta,\gamma$, with corresponding affine functions
$L',L''$.
Moreover, $L''(x+y)=L(x)+L'(y)+o_\delta(1)$,
in the sense that the difference is an affine function on $\reals^{d+d}$
whose coefficients are $o_\delta(1)$. Thus $(f,g,h)$ has the required structure.
\qed

\end{document}